\newtheorem{theorem}{Theorem}[section]
\newtheorem{corollary}[theorem]{Corollary}
\newtheorem{remark}{Remark}[section]
\newtheorem{definition}{Definition}[section]
\newtheorem{lemma}[theorem]{Lemma}
\newtheorem{proposition}[theorem]{Proposition}
\newtheorem{question}[theorem]{Question}
\newtheorem{example}[theorem]{Example}
\newtheorem*{theorem*}{Theorem}
\newtheorem*{question*}{Question}
\begin{document}

\title{Conics associated with Totally Degenerate Curves}
\date{\today}

\author{Qixiao Ma}
\address{Department of Mathematics, Columbia University,}
\email{qxma10@math.columbia.edu}

\begin{abstract}
Let $k$ be a field. Let $X/k$ be a stable curve whose geometric irreducible components are smooth rational curves. Taking Stein factorization of its normalization, we get a conic. We show the conic is non-split in certain cases. As an application, we show for $g\geq3$, the period and index of the universal genus $g$ curve both equal to $2g-2$.

\end{abstract}

\maketitle

\tableofcontents
\section{Introduction}
Let $\Gamma=(V,E)$ be a graph with degree at least $4$ at all the vertices. Let's work over a field $k_0$. Let $X_{\Gamma}/k_{\Gamma}$ be the universal totally degenerate curve with dual graph $\Gamma$. Let $X_\Gamma^\nu$ be the normalization of $X_\Gamma$. Let $K_\Gamma=\Gamma(X_{\Gamma}^\nu,\mathcal{O}_{X_{\Gamma}^\nu})$.
The Stein factorization of $X_\Gamma^{\nu}/k_\Gamma$ produces a conic $C_\Gamma/K_\Gamma$. One may ask:
\begin{question}\label{question}
Does the conic $C_\Gamma$ split (i.e., has a rational point)?
\end{question}
We study this question in general, and show $C_\Gamma$ does not split in certain cases:
\begin{theorem}\label{main0}The conic $C_\Gamma$ does not split, if $\Gamma$ is one of the following graphs:
\begin{enumerate}
\item $V=\{v_i\}_{i\in \mathbb{Z}/(g-1)\mathbb{Z}}$, one edge joining $v_i$ to $v_{i+j}$ for $j\in \{\pm1,\pm2\}$, ($g\geq7$).
\item The complete graph $K_5$.
\item $V=\{v_i\}_{i\in\mathbb{Z}/(g-1)\mathbb{Z}}$, with two edges joining $v_i,v_{i+1}$ ($g\geq4$).
\item $V=\{v_0,v_1\}$, two edges joining $v_0,v_1$ and a loop edge at each $v_i$.
\end{enumerate}
\end{theorem}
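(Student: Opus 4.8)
The plan is to reinterpret the statement "$C_\Gamma$ does not split" as the non-vanishing of a quaternion class in $\mathrm{Br}(K_\Gamma)[2]$, and to produce that class explicitly from the combinatorics of $\Gamma$. First I would describe the conic concretely. Geometrically $X_\Gamma^\nu$ is a disjoint union of the components $\mathbb{P}^1_v$, $v\in V$, and the residual monodromy of the universal family permutes them through $\mathrm{Aut}(\Gamma)$. In each of the four cases $\Gamma$ is vertex-transitive, so this action is transitive, $K_\Gamma$ is a field with $[K_\Gamma:k_\Gamma]=|V|$, and $C_\Gamma$ is a single component $\mathbb{P}^1_v$ regarded as a $K_\Gamma$-variety, i.e. the descent of $\mathbb{P}^1_v$ along the stabilizer $\mathrm{Stab}(v)\subseteq\mathrm{Aut}(\Gamma)$. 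The descent datum is the action of $\mathrm{Stab}(v)$ on $\mathbb{P}^1_v$ together with its four marked points, the half-edges at $v$ (recall that for a totally degenerate stable curve of genus $g$ one has $g=b_1(\Gamma)$, which controls how large the graph is).

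The heart of the matter is that $\mathbb{P}^1$ with four general marked points has automorphism group exactly the Klein four-group $V_4\subset S_4$ of double transpositions. So among the generators of $\mathrm{Stab}(v)$, precisely those acting on the four half-edges as a double transposition are realized by a genuine involution of $\mathbb{P}^1_v$ fixing the cross-ratio, while the remaining generators act on the modulus. Choosing the coordinate $t$ with one swapped pair of half-edges at $0,\infty$ and the other pair at $1,\lambda$, the distinguished involution is $\tau\colon t\mapsto\lambda/t$. Twisting $\mathbb{P}^1_t$ by $\tau$ along the quadratic character $d$ that records this piece of the monodromy yields the conic attached to the quaternion symbol $(\lambda,d)$ over $K_\Gamma$: the twist acquires a rational point exactly when $\lambda$ is a norm from $K_\Gamma(\sqrt d)$. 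Thus I reduce non-splitting of $C_\Gamma$ to non-triviality of a symbol built from the cross-ratio $\lambda$ and the discriminant $d$ produced by the graph's symmetry.

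To show $(\lambda,d)\neq0$ I would use the residue (tame symbol) maps on $\mathrm{Br}(K_\Gamma)[2]$ attached to the divisorial valuations of the moduli space, equivalently to the boundary strata of $\overline{\mathcal M}_\Gamma$ where the node configuration degenerates further. At the valuation along which $\lambda$ has odd order — geometrically, the degeneration collapsing the relevant pair of half-edges — the residue of $(\lambda,d)$ equals the class of $d$ in $\kappa^*/\kappa^{*2}$ of the residue field. I would then compute $d$ explicitly from $\mathrm{Stab}(v)$ in each case and check it is a non-square there, so the residue is non-zero and hence $(\lambda,d)\neq0$; equivalently one may specialize the transcendental moduli to a concrete base point chosen so that $d$ stays a non-square and $\lambda$ a non-norm, and invoke that a conic with non-split specialization is itself non-split. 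The four graphs feed identical machinery but different bookkeeping: cases (1) and (3) are cyclic, so the needed involution and its character repeat around the cycle (whence the ranges $g\ge7$, $g\ge4$, guaranteeing the cycle is long enough for the configuration to be sufficiently general); case (2) uses that $\mathrm{Aut}(K_5)=S_5$ acts amply on the half-edges; and case (4) is the genus-$3$ base case, in which the loop pair and the bridge pair directly supply the two slots of the symbol.

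The main obstacle is the passage from the abstract Stein-factorization conic to the explicit symbol $(\lambda,d)$: one must show that the horizontal part of the monodromy (the single-transposition generators of $\mathrm{Stab}(v)$ acting on the modulus) neither interferes with nor trivializes the vertical involution $\tau$, and must pin down the precise quadratic character $d$ that the graph attaches — this is exactly where genericity of the universal curve, and hence the lower bounds on $g$, enter. Once the symbol is correctly identified, the residue computation is a finite combinatorial check; I expect the real work, and the place where each of the four graphs must be treated on its own terms, to lie in verifying that the relevant $d$ is a genuine non-square (equivalently, that the class survives the residue) rather than in the formal reduction itself.
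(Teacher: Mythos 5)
Your strategy --- identify the class of $C_\Gamma$ with an explicit quaternion symbol over $K_\Gamma$ and detect it by a residue on the moduli space --- is genuinely different from the paper's, which never computes the class of the universal conic at all: the paper reduces to global fields (Propositions \ref{prop1} and \ref{prop2}), reverse-engineers a totally degenerate curve with non-split conic from an index-$2$ class in a relative Brauer group via clutching (Corollary \ref{prop3}), and produces that class by Chebotarev and the local--global sequence (Lemma \ref{keylem}). Your route, if completed, would give a proof at the generic point with no arithmetic input, valid even over algebraically closed base fields of characteristic $\neq 2$. But as written it has genuine gaps.

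First, the identification $[C_\Gamma]=(\lambda,d)$ only makes literal sense when $\mathrm{Gal}(k'/K_\Gamma)=\mathrm{Stab}(v)$ is $\mathbb{Z}/2$ acting on the four half-edges by a double transposition --- essentially case (1). For $K_5$ one has $\mathrm{Stab}(v)=S_4$, which permutes the three cross-ratios nontrivially, so $\lambda$ is not an element of $K_\Gamma$ and there is no single quadratic character $d$ doing the job: the symbol you write down does not exist over $K_\Gamma$. In cases (3) and (4) the stabilizer is likewise a $2$-group strictly larger than $\mathbb{Z}/2$. The repair is to base-change to $L=k'^{\langle\sigma\rangle}$ for one well-chosen involution $\sigma\in\mathrm{Stab}(v)$ inducing a double transposition on the half-edges (non-splitting of $C_\Gamma\otimes_{K_\Gamma}L$ implies non-splitting of $C_\Gamma$), checking along the way that $\sigma$ acts faithfully on $k'$; you do not take this step, and without it the ``quadratic character recorded by the monodromy'' is not pinned down. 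Second, you explicitly defer the decisive verification that $d$ is a non-square in the residue field at $\{\lambda=0\}$. That is where all the content lies; it amounts to showing that $\sigma$ acts nontrivially on the boundary stratum $\{\lambda_v=0\}\times\prod_{w\neq v}\overline{\mathcal{M}}_{0,E(w)}$, i.e.\ on the moduli of the other components, and it does hold in each of the four cases, but a proposal that postpones its own crux is not yet a proof. (Your alternative of specializing the moduli to a concrete point cannot work over an algebraically closed $k_0$, where every such specialization splits; one would first have to descend the base field as in Proposition \ref{prop1}.) Finally, the tame symbol is blind to $2$-torsion in characteristic $2$, whereas the theorem is asserted over an arbitrary $k_0$; the paper's passage through global fields of any characteristic is what covers that case.
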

We apply the result to reprove the following result:
\begin{theorem}The period and index of the universal genus $g$ curve both equal to $2g-2$.\footnote{This is implied by the strong Franchetta conjecture, see \cite{Schoeer}.}
\end{theorem}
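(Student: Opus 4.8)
The plan is to squeeze $P$ and $I$ between the canonical class from above and a single totally degenerate fiber from below. Write $C_\eta$ for the generic genus $g$ curve over $K=k(\mathcal M_g)$. For the upper bound, the dualizing sheaf $\omega$ is a line bundle on $C_\eta$ of degree $2g-2$ defined over $K$, hence also a Galois-invariant class; therefore $P\mid 2g-2$ and $I\mid 2g-2$. Since $P\mid I$, everything reduces to the lower bound $2g-2\mid P$. For that I would invoke the specialization principle: a Galois-invariant divisor class (resp.\ a line bundle) of degree $d$ on $C_\eta$ spreads out over a neighborhood in $\overline{\mathcal M}_g$ and restricts, with the same degree, to any fiber, so $P(C_s)\mid P(C_\eta)$ and $I(C_s)\mid I(C_\eta)$ for $C_s$ in a dense locus. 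It thus suffices to produce one totally degenerate genus $g$ curve $X_\Gamma$ with $P(X_\Gamma)=I(X_\Gamma)=2g-2$; specializing the universal curve into the boundary stratum carrying $X_\Gamma$ then forces $2g-2\mid P(C_\eta)$, and with the upper bound $P(C_\eta)=I(C_\eta)=2g-2$.

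I would take $\Gamma$ to be graph $(4)$ for $g=3$ and graph $(3)$ for $g\ge 4$, so that every $g\ge 3$ is covered. In each listed case $|V|=g-1$, every vertex has valence $4$, and — as the very formation of the single conic $C_\Gamma/K_\Gamma$ records — the monodromy permutes the $g-1$ geometric components of $X_\Gamma$ transitively, with $[K_\Gamma:k_\Gamma]=g-1$. The index lower bound is then immediate from Theorem \ref{main0}: any line bundle $L$ on $X_\Gamma$ over $k_\Gamma$ restricts to $C_\Gamma$, and since $C_\Gamma$ is non-split its Picard group is generated by a class of degree $2$, so $\deg(L|_{C_\Gamma})$ is even; by transitivity the multidegree of $L$ is constant and equal to this even integer on each component, whence $\deg L=(g-1)\deg(L|_{C_\Gamma})\in(2g-2)\mathbb Z$. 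Thus $2g-2\mid I(X_\Gamma)$, and $I(X_\Gamma)=2g-2$.

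The delicate point is the period, because a Galois-invariant class need not restrict to a genuine line bundle on $C_\Gamma$, so the even-degree argument fails for classes. Here I would work with the multidegree extension
\begin{equation*}
0\longrightarrow \mathrm{Pic}^0(X_{\Gamma,\bar k})\longrightarrow \mathrm{Pic}(X_{\Gamma,\bar k})\xrightarrow{\ \mathrm{multideg}\ }\mathbb Z^{V}\longrightarrow 0,
\end{equation*}
where $\mathrm{Pic}^0(X_{\Gamma,\bar k})$ is the $g$-dimensional torus on which Galois acts through the monodromy on the cycles of $\Gamma$. Passing to Galois invariants, a constant multidegree $d_0\cdot(1,\dots,1)$ lifts to an invariant class precisely when its image $\delta(d_0)$ under the connecting map $\delta\colon(\mathbb Z^{V})^{\mathrm{Gal}}\to H^1\bigl(\mathrm{Gal},\mathrm{Pic}^0(X_{\Gamma,\bar k})\bigr)$ vanishes, and a positive invariant class then has total degree $(g-1)d_0$. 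Hence the period statement is equivalent to the claim that $\delta(1)$ has order exactly $2$: the relation $2\delta(1)=\delta(2)=0$ holds because $d_0=2$ is realized by $\omega$, so the real content is $\delta(1)\ne 0$, which forces the minimal admissible $d_0$ to be $2$ and gives $P(X_\Gamma)=2(g-1)=2g-2$.

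The main obstacle is exactly this nonvanishing $\delta(1)\ne 0$, which I expect to be governed by the non-split conic. Concretely, $\delta(1)$ should be the image of the Brauer class $[C_\Gamma]\in\mathrm{Br}(K_\Gamma)[2]$ under a Shapiro/dimension-shift identification coming from the transitive monodromy on the components, so that the non-splitting established in Theorem \ref{main0} supplies precisely the needed nonvanishing. Granting this identification one obtains $P(X_\Gamma)=I(X_\Gamma)=2g-2$, and the specialization principle of the first paragraph upgrades it to $P(C_\eta)=I(C_\eta)=2g-2$ for all $g\ge 3$.
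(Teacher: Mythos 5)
Your architecture matches the paper's: canonical class for the upper bound, $P\mid I$ to reduce index to period, specialization into a totally degenerate boundary stratum, and the non-split conic of Theorem \ref{main0} as the ultimate source of the lower bound. But the proof has a genuine gap exactly at the step you flag yourself: the nonvanishing $\delta(1)\neq 0$. You write that $\delta(1)$ ``should be'' the image of $[C_\Gamma]\in\mathrm{Br}(K_\Gamma)[2]$ under a Shapiro-type identification and then proceed ``granting this identification.'' That identification is not automatic and is in fact the content of Section \ref{nontri} of the paper: the coefficient module $\mathrm{Pic}^0(X_{\Gamma,\bar k})$ is a torus with character lattice $\mathrm{H}^1(\Gamma,\mathbb{Z})$, which is \emph{not} an induced module from the set of components, so Shapiro's lemma does not apply directly, and one must actually decompose $\mathrm{H}^1(\Gamma,\mathbb{Z})$ into a Galois-stable direct sum (the paper does this after base-changing to the fixed field of a reflection $\tau$) so that the torsor $\mathrm{Pic}^{\mathbf{1}}$ splits off a factor governed by a genus-one two-component piece. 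For that piece the paper proves Lemma \ref{pictor}: an explicit isomorphism $C\setminus\{P\}\xrightarrow{\sim}\mathrm{Pic}^{\mathbf{1}}_{X/k}$ built by gluing canonical sections, so non-splitness of the conic literally becomes non-triviality of the torsor. Without some version of this geometric input your argument proves nothing beyond $2\delta(1)=0$.

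A secondary but real issue is the specialization step for the period. A rational point of $\mathrm{Pic}^{d}$ of the smooth generic fiber does not simply ``spread out and restrict'' across the boundary of $\overline{\mathcal M}_g$: to transport it to the totally degenerate fiber one needs a \emph{regular} total space so that the class, represented by a line bundle after passing to the strict henselization, extends by taking closure of the associated Weil divisor; the paper invokes \cite[B.2]{Conrad} to build such a regular model over $k_\Gamma[[t_1]]$ and then slices the remaining parameters into a chain of DVR specializations, using Galois-transitivity of the components only at the very end to force the restricted class into $\mathrm{Pic}^{\mathbf{1}}$. Your one-line appeal to a ``specialization principle'' skips the regularity hypothesis, which is where such statements typically fail. (Your multidegree bookkeeping, by contrast, is correct and even slightly cleaner than the paper's, since showing every invariant class on $X_\Gamma$ has total degree in $(2g-2)\mathbb{Z}$ would bypass the paper's citation that the Brauer obstruction has period $g-1$; and the index part is fine once the period is settled, via $P\mid I$.)
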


In section \ref{sec2}, we introduce the conic associated with a family of totally degenerate curves. In section \ref{sec3}, we reduce our problem to finding some specific curves over global fields, whose associated conic is non-split. In section \ref{sec4}, we explain the method of constructing totally degenerate curves via clutching, and further reduce our problem to finding certain Brauer class of index two. In section \ref{sec5}, we find the Brauer class by class field theory. In section \ref{sec6}, we finish the proof of Theorem \ref{main0}. In the last sections , we apply the previous results to show the period and index of the universal genus $g$ curve both equal to $2g-2$.
\vspace{1mm}

\textbf{Acknowledgements.} I am very grateful to my advisor Aise Johan de Jong for his invaluable idea, enlightening discussions and unceasing encouragement. Thank Chao Li for helpful discussions on Lemma \ref{keylem}.

\section{Preliminaries}
\subsection{Totally degenerate curves}
Let $k$ be a field, let $k^{\mathrm{sep}}$ be its separable closure. Let $X$ be a one-dimensional scheme defined over $k$.
We say $X$ is a totally degenerate curve, if
$X_{k^{\mathrm{sep}}}$ is a connected union of smooth rational curves, with only nodal singularities, and each irreducible component has at least three nodes. Such curves are Deligne-Mumford stable curves, they are parameterized by nice moduli stacks, see \cite[X.3]{ACGH}.

Recall the dual graph of $X$, is a graph $\Gamma=(V,E)$ whose set of vertices $V$ consists of irreducible components of $X_{k^{\mathrm{sep}}}$, and whose set of edges $E$ consists of nodes in $X_{k^\mathrm{sep}}$: for each node, we assign an edge between the two components that meet at the node. Let's denote the set of edges connected to a vertex $v$ by $E(v)$.

Since $X$ is geometrically connected, we know $p_a(X)=h^1(X_{k^\mathrm{sep}},\mathcal{O}_{X_{k^\mathrm{sep}}})$. A simple argument comparing $X_{k^\mathrm{sep}}$ with its normalization $X_{k^{\mathrm{sep}}}^\nu$ shows $h^1(X_{k^\mathrm{sep}},\mathcal{O}_{X_{k^\mathrm{sep}}})$ coincides with $g(\Gamma):=|E|-|V|+1$.

\subsection{The moduli stacks.}\label{1.2} Let's fix a base field $k_0$. We recall some facts about the moduli stack of totally degenerate nodal curves, see \cite[XII.10]{ACGH} for details.

Let $\Gamma=(V,E)$ be a graph, such that $|E(v)|\geq 3$ for each $v\in V$. Consider the moduli stack $\overline{\mathcal{M}}_{g(\Gamma),k_0}$ of stable genus $g(\Gamma)$ curves over $k_0$.
It has a closed substack $\mathcal{D}_{\Gamma,k_0}$, parameterizing families of totally degenerate nodal curve with dual graph $\Gamma$. Let's omit the subscript $k_0$ when it is clear from context. For any finite set $S$, let $\mathcal{M}_{0,S}$ be the moduli stack of stable genus $0$ curves with $S$-labeled distinct marked points. This is an irreducible smooth $k$ variety.
Let $\mathcal{M}_{\Gamma}=\prod_{v\in V}\mathcal{M}_{0,E(v)}$, there is a natural $\mathrm{Aut}(\Gamma)$-action on $\mathcal{M}_\Gamma$, which permutes the components and reorder the marked points. The stack $\mathcal{D}_\Gamma$ can be naturally identified with quotient stack $[\mathcal{M}_{\Gamma}/\mathrm{Aut}(\Gamma)]$ via clutching morphism $c\colon \mathcal{M}_\Gamma\to\overline{\mathcal{M}}_{g(\Gamma)}$, thus $\mathcal{D}_\Gamma$ is irreducible.

\subsection{Admissible graphs}
Let $\Gamma$ be a graph such that $|E(v)|\geq3$ for each $v\in V$. We say $\Gamma$ is admissible, if the $\mathrm{Aut}(\Gamma)$-action on $\mathcal{M}_\Gamma$ is generically free. We claim $\Gamma$ is admissible, if $|E(v)|\geq 4$ for each $v\in V$. Here is sketch of an argument: Note that for $n\geq 4$, a point in $\mathcal{M}_{0,n}$ represents a smooth rational curve with $n$ marked points. The marked points determine $6\cdot{n\choose 4}$ cross ratios. Pick a $k_0^{\mathrm{sep}}$-point $P$ in $\mathcal{M}_\Gamma=\prod_{v\in V}\mathcal{M}_{0,E(v)}$, such that the cross ratios on all the components are all different. The group $\mathrm{Aut}(\Gamma)$ acts freely on $P$, because the coordinates of $P$ are distinguished by cross ratios. Finally note that being free under a finite group action is an open condition.

Admissible graphs may have vertices of degree $3$. For example, the complete bipartite graph $K_{3,4}$ is admissible: the components with $3$ edges cannot be directly distinguished by cross ratios, but can be distinguished by how the components with $4$ edges are connected to them. The same type of arguments show the notion of admissibility is a property of $\Gamma$ and is independent of $k_0$.

\subsection{The universal curve}\label{k'}
Let $\Gamma$ be a admissible graph. Let $\mathcal{X}_{\Gamma,k_0}\to\mathcal{D}_{\Gamma,k_0}$ be the universal family, restricted from the universal family over $\overline{\mathcal{M}}_{g,k_0}$. The admissibility of $\Gamma$ shows $\mathcal{D}_{\Gamma,k_0}=[\mathcal{M}_{\Gamma,k_0}/\mathrm{Aut}(\Gamma)]$ has an open substack represented by a scheme. Take the generic fiber, we obtain a totally degenerate nodal curve $X_{\Gamma,k_0}$ over $k_{\Gamma,k_0}:=\mathrm{frac}(\mathcal{D}_{\Gamma,k_0})$.

Let $k'$ be the function field of $\mathcal{M}_{\Gamma,k_0}$, then $k'/k_{\Gamma,k_0}$ is a Galois extension with Galois group $\mathrm{Aut}(\Gamma)$, and $X_{k'}$ is a union of $\mathbb{P}^1_{k'}$s with $k'$-rational nodes and dual graph $\Gamma$.

\subsection{Normalization in families}\label{normal} We justify the notion of normalization in families. We use this term only for families of totally degenerate curves with a fixed dual graph.

Let $\Gamma$ be an admissible graph, let $\mathcal{X}_\Gamma\to \mathcal{D}_\Gamma$ be the universal family. Since $\mathcal{D}_\Gamma$ is smooth, by local property of smooth morphism and normalization, we know
the usual normalization $\nu\colon (\mathcal{X}_\Gamma)^\nu\to\mathcal{X}_\Gamma$ is also fiber-wise normalization over $\mathcal{D}_\Gamma$.

Let $Y\to S$ be a family of totally degenerate curves with dual graph $\Gamma$ over an arbitrary scheme $S$. Let $\gamma\colon S\to\mathcal{D}_\Gamma$ be the map to classification stack. We define $Y^\nu$ by $(\mathcal{X}_\Gamma)^\nu\times_{\mathcal{D}_\Gamma,\gamma}S$.
This operation commutes with base change, since fiber products can be composed. 

\section{The Conic}\label{sec2}
Recall in general, a smooth genus-zero curve is called a conic, since the anti-canonical embedding has degree two. In this section, for a family of totally degenerate curves with a fixed dual graph, we define the notion of its associated conic. Let's work over a fixed base field $k_0$.

Let $f\colon X\to S$ be a family of totally degenerate curves with a fixed dual graph $\Gamma$. Let $f^\nu\colon X^\nu\to S$ be the normalization in families defined in section \ref{normal}. Note the geometric components of fibers of $f^\nu$ are all $\mathbb{P}^1$s, so the Stein factorization $\mathrm{St}\colon X^\nu\to\underline{\mathrm{Spec}}((f^\nu)_*\mathcal{O}_{X^\nu})$
is a conic bundle \cite[11.5]{H}.
\begin{definition} We denote conic bundle $\mathrm{St}\colon X^\nu\to\underline{\mathrm{Spec}}((f^\nu)_*\mathcal{O}_{X^\nu})$
by $\mathrm{St}\colon C_f\to S_f$, and call it the conic associated with the family $f$.
\end{definition}
To sum up, we have the following diagram:$$\xymatrix{
C_f\ar@{=}[r]\ar[d]_{\mathrm{St}} &X^\nu\ar[d]_{\mathrm{St}}\ar[rd]^{f^\nu}\ar[r]^{\nu} & X\ar[d]^{f}\\
S_f\ar@{=}[r] &\underline{\mathrm{Spec}}((f^\nu)_*\mathcal{O}_{X^\nu})\ar[r] &S.}$$

The construction of associated conics commutes with arbitrary base change:
\begin{lemma}\label{lemma1} Let $T\to S$ be a morphism, let $f_T\colon X_T\to T$ be the base change of $f$ along $T$, then $S_{(f_T)}\cong S_f\times_ST$ and $C_{(f_T)}\cong C_f\times_ST$.
\end{lemma}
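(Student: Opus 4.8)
The plan is to reduce the statement to two base-change facts: that normalization in families commutes with base change, and that the Stein factorization---that is, the formation of the relative $\underline{\mathrm{Spec}}$ of the pushforward of the structure sheaf---commutes with base change. The first is exactly the property recorded in section \ref{normal}, where $X^\nu$ is defined as a fiber product over $\mathcal{D}_\Gamma$; composing fiber products immediately gives $(X_T)^\nu \cong X^\nu \times_S T$ over $T$, and hence the identification $C_{(f_T)} \cong C_f \times_S T$ of total spaces, once the base identification is settled and the maps are seen to match. It therefore remains to analyze $S_f = \underline{\mathrm{Spec}}((f^\nu)_*\mathcal{O}_{X^\nu})$.

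Write $\mathcal{A} = (f^\nu)_*\mathcal{O}_{X^\nu}$, a quasi-coherent $\mathcal{O}_S$-algebra, and let $g\colon T\to S$ be the structure morphism. Since relative $\underline{\mathrm{Spec}}$ commutes with base change formally, one has $\underline{\mathrm{Spec}}_S(\mathcal{A})\times_S T \cong \underline{\mathrm{Spec}}_T(g^*\mathcal{A})$, so it suffices to produce a natural isomorphism of $\mathcal{O}_T$-algebras $g^*\mathcal{A}\xrightarrow{\sim}(f_T^\nu)_*\mathcal{O}_{X_T^\nu}$. This is a cohomology-and-base-change assertion for $R^0 f^\nu_*$.

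The key step is to verify the hypotheses of Grothendieck's cohomology and base change theorem. The morphism $f^\nu$ is proper and flat, because $X^\nu$ is a pullback of the proper flat morphism $(\mathcal{X}_\Gamma)^\nu\to\mathcal{D}_\Gamma$ and both properties are stable under base change. Every geometric fiber of $f^\nu$ is a disjoint union of exactly $|V|$ copies of $\mathbb{P}^1$, where $|V|$ is the fixed number of vertices of $\Gamma$; hence $h^0(\mathcal{O})=|V|$ is locally constant and $h^1(\mathcal{O})=0$ on all fibers. The vanishing of $R^1 f^\nu_*\mathcal{O}_{X^\nu}$ together with the constancy of $h^0$ lets us conclude that $R^0 f^\nu_*\mathcal{O}_{X^\nu}$ is locally free of rank $|V|$ and that its formation commutes with arbitrary base change. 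This yields the desired isomorphism $g^*\mathcal{A}\cong(f_T^\nu)_*\mathcal{O}_{X_T^\nu}$, and hence $S_{(f_T)}\cong S_f\times_S T$; combined with the normalization base change of the previous paragraph it gives $C_{(f_T)}\cong C_f\times_S T$.

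The remaining, essentially bookkeeping, obstacle is to check that these two base-change isomorphisms are compatible with the Stein maps, so that the square defining the conic bundle commutes after base change. This follows from the naturality of the adjunction map $(f^\nu)^*\mathcal{A}\to\mathcal{O}_{X^\nu}$ that defines the Stein factorization, but it must be stated rather than assumed. The genuine mathematical content is concentrated in the cohomology-and-base-change step, which is unobstructed here precisely because totally degenerate families with a fixed dual graph have fibers of constant cohomology.
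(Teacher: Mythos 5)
Your proof is correct and follows essentially the same route as the paper's: reduce to the base-change compatibility of normalization in families (built into its definition as a fiber product over $\mathcal{D}_\Gamma$) and then of $(f^\nu)_*\mathcal{O}_{X^\nu}$, the latter by cohomology and base change using that the fibers of $f^\nu$ have constant $h^0$ and vanishing $h^1$. You simply spell out the hypotheses (properness, flatness, $h^0=|V|$, $h^1=0$) and the compatibility of the Stein maps in more detail than the paper does.
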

\begin{proof} Since normalization in families commutes with base change, it suffices to show Stein factorization, or $(f^\nu)_*$ commutes with base change. Note that $h^i(X_s^\nu,\mathcal{O}_{X_s})$ is constant for $s\in S$, we conclude by theorems of cohomology and base change, see \cite[II.5]{mum}.
\end{proof}
Here is the specific case that we are interested: Let $\Gamma$ be an admissible graph.
Let $f\colon X_{{\Gamma,k_0}}\to \mathrm{Spec}(k_{\Gamma,k_0})$ be the universal curve. We denote $\Gamma(X_{\Gamma,k_0},\mathcal{O}_{X_{\Gamma,k_0}})$ by $K_{\Gamma,k_0}$ and denote the associated conic $C_f$ by $C_{\Gamma,k_0}$.

In the following study of the associated conic, we mainly focus on the cases when the following two conditions hold:
\begin{itemize}
\item The group $\mathrm{Aut}(\Gamma)$ acts transitively on $V$ and on $E$,
\item For each $v\in V$, $|E(v)|$ is an even number.
\end{itemize}
The first condition ensures the geometrical points of $\mathrm{Spec}(K_{\Gamma,k_0})$ lie in one orbit, so that $K_{\Gamma,k_0}$ is a field. If this condition does not hold, Stein factorization produces a conic over a disconnected space, we can study it over the connected components. If the second condition is not satisfied, the subscheme of nodes on $X_{\Gamma,k_0}$ pulls back to an odd degree zero-cycle on $C_{\Gamma,k_0}=X_{\Gamma,k_0}^\nu$, which forces the conic to split.
\section{Reduction to Global Fields}\label{sec3}
Let $\Gamma=(V,E)$ be a graph with degree at least $4$ at each vertex. Let $k_0$ be field. Let $C_{\Gamma,k_0}/K_{\Gamma,k_0}$ be the associated conic. We show Question \ref{question} can be tested by curves over global fields. First let's reduce the base field $k_0$ to global fields.
\begin{proposition}\label{prop1}If the conic $C_{\Gamma,k_0}$ splits, then there exists a global field $M$ such that the conic $C_{\Gamma,M}$ splits.
\end{proposition}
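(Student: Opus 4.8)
The plan is a spreading-out and specialization argument, exploiting that the entire construction is already defined over the prime field $\mathbb{F}\subseteq k_0$ (so $\mathbb{F}=\mathbb{Q}$ or $\mathbb{F}_p$) and that the formation of the associated conic commutes with base change (Lemma \ref{lemma1}). For any field extension $L/\mathbb{F}$ we have $\mathcal{D}_{\Gamma,L}=\mathcal{D}_{\Gamma,\mathbb{F}}\times_{\mathbb{F}}L$, so Lemma \ref{lemma1} identifies $C_{\Gamma,L}$ with the base change of $C_{\Gamma,\mathbb{F}}$ along $K_{\Gamma,\mathbb{F}}\to K_{\Gamma,L}$. Throughout I would use that $\mathcal{M}_\Gamma=\prod_{v}\mathcal{M}_{0,E(v)}$, hence $\mathcal{D}_\Gamma$, is geometrically irreducible over $\mathbb{F}$, so that base changes and general fibers remain irreducible with a single generic point whose residue field computes $K_\Gamma$.

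First I would reduce to the case that $k_0$ is finitely generated over $\mathbb{F}$. Write $k_0=\varinjlim_i k_i$ as the filtered union of its finitely generated subfields. Base-change compatibility gives $K_{\Gamma,k_0}=\varinjlim_i K_{\Gamma,k_i}$ and $C_{\Gamma,k_0}=C_{\Gamma,k_i}\times_{K_{\Gamma,k_i}}K_{\Gamma,k_0}$. A splitting, i.e.\ a section of the conic over $K_{\Gamma,k_0}$, involves only finitely many elements, so by the standard finite-presentation (colimit) arguments for schemes of finite type it descends to a section over some $K_{\Gamma,k_i}$. Thus $C_{\Gamma,k_i}$ already splits, and I may replace $k_0$ by this finitely generated $k_i$.

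Next, with $k_0$ finitely generated, I would choose a finitely generated $\mathbb{F}$-subalgebra $R\subseteq k_0$ with $\mathrm{Frac}(R)=k_0$ and spread out. The conic bundle $\mathcal{C}\to\mathcal{S}$ of the universal family over $\mathcal{D}_\Gamma^\circ$, which is defined over $\mathbb{F}$, base-changes to $g\colon\mathcal{C}_R\to\mathcal{S}_R\to\mathrm{Spec}(R)$, whose generic fiber over $\mathrm{Spec}(R)$ is the $k_0$-picture and whose generic point $\zeta\in\mathcal{S}_R$ has residue field $K_{\Gamma,k_0}$. The splitting section at $\zeta$ extends to a section over some open dense $V\subseteq\mathcal{S}_R$ with $\zeta\in V$. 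The goal is then to produce a point $u\in\mathrm{Spec}(R)$ with $\kappa(u)$ a global field such that the generic point $\eta_u$ of the fiber $g^{-1}(u)=\mathcal{S}_{\kappa(u)}$ still lies in $V$; for such $u$ the section restricts to a rational point of the conic over $\kappa(\eta_u)=K_{\Gamma,\kappa(u)}$, so $M:=\kappa(u)$ works.

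Producing this $u$ is the only real work, and it is a dimension count, which I expect to be the main technical point. Put $Z=\mathcal{S}_R\setminus V$, a proper closed subset of the irreducible $\mathcal{S}_R$ not containing $\zeta$, hence of strictly smaller dimension. If $Z$ does not dominate $\mathrm{Spec}(R)$ one is done immediately for $u$ outside $\overline{g(Z)}$; otherwise generic flatness and the fiber-dimension theorem give a dense open $U\subseteq\mathrm{Spec}(R)$ over which the fibers of $g$ are irreducible of the expected dimension $\delta$ (using geometric irreducibility) while $\dim\bigl(Z\cap g^{-1}(u)\bigr)<\delta$, so $\eta_u\notin Z$, i.e.\ $\eta_u\in V$. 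Finally I would pick $u\in U$ with $\kappa(u)$ global: in characteristic $0$ any closed point of $U$ gives a number field, and in characteristic $p$ any point of dimension one (which exists since $U$ is a dense open in a variety of dimension $\geq 1$) gives the function field of a curve over a finite field. The single degenerate case, $k_0$ finite of transcendence degree $0$ in characteristic $p$, is handled directly: if $C_{\Gamma,k_0}$ splits then so does its base change $C_{\Gamma,\mathbb{F}_q(t)}$, and $\mathbb{F}_q(t)$ is a global field.
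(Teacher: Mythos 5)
Your proposal is correct and follows essentially the same route as the paper: descend the splitting to a finitely generated $\mathbb{F}$-subalgebra of $k_0$, then specialize to a point whose residue field is a number field (a maximal ideal in characteristic $0$) or a global function field (a one-dimensional point in characteristic $p$). The only difference is one of detail: you make explicit, via the fiber-dimension count over a dense open of $\mathrm{Spec}(R)$, why the section survives to the generic point of the special fiber, and you handle the degenerate case of finite $k_0$ — points the paper's proof asserts implicitly in the phrase ``the section $s_A$ specializes to a section $s_{A/\mathfrak{m}}$.''
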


\begin{proof}
Let $\mathbb{F}$ be the prime field of $k_0$ (i.e., $\mathbb{Q}$ or $\mathbb{F}_p$). Suppose $C_{K,\Gamma}$ splits, then we can pick a section $s\colon\mathrm{Spec}(K_{\Gamma,k_0})\to C_{\Gamma,k_0}$. By Lemma \ref{lemma1}, we know $C_{\Gamma,k_0}=(C_{\Gamma,\mathbb{F}})_{k_0}$. Since $C_{\Gamma,\mathbb{F}}$ is a finite type scheme over $\mathbb{F}$, we know $s$ is defined over some finite type $\mathbb{F}$-subalgebra $A\subset k_0$.
More precisely, the section $s$ is the base change of a section $s_A\colon \mathrm{Spec}(K_{\Gamma,A})\to C_{\Gamma,A}$.

If $\mathrm{char}(k_0)=0$, we take a maximal ideal $\mathfrak{m}$ of $A$, then $M:=A/\mathfrak{m}$ is a finite extension of $\mathbb{Q}$. The section $s_A$ specializes to a section $s_{A/\mathfrak{m}}\colon\mathrm{Spec}(K_{\Gamma,M})\to C_{\Gamma,M}$.
If $\mathrm{char}(k_0)>0$, we take a curve mapping to $\mathrm{Spec}(A)$, replace $A$ by its localization at the generic point of the curve, then do the same argument.
\end{proof}

By the last proposition, we reduce Theorem \ref{main} to the case where the base field $k_0$ is a global field. The next proposition further reduces the problem to finding a totally degenerate curve over certain global field whose associated conic does not split.
\begin{proposition}\label{prop2}If the conic $C_{\Gamma,M}$ splits, then for any field extension $M'/M$, and any totally degenerate curve $g\colon X\to M'$ with dual graph $\Gamma$, the conic $C_g$ splits.
\end{proposition}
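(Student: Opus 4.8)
The plan is to exhibit $C_g$ as a single fibre of one conic bundle living over the moduli and then propagate the splitting of the generic fibre to every fibre by regularity of the base. The family $g\colon X\to\mathrm{Spec}(M')$ is classified by a morphism $\gamma\colon\mathrm{Spec}(M')\to\mathcal{D}_{\Gamma,M'}$, and composing with the base-change projection $\mathcal{D}_{\Gamma,M'}=\mathcal{D}_{\Gamma,M}\times_M M'\to\mathcal{D}_{\Gamma,M}$ we regard $X$ as a totally degenerate curve of type $\Gamma$ over the base field $M$. Apply the construction of section \ref{normal} and the Definition of the associated conic to the universal family $\mathcal{X}_\Gamma\to\mathcal{D}_{\Gamma,M}$, obtaining a conic bundle $\mathrm{St}\colon\mathcal{C}\to\mathcal{S}$ with $\mathcal{S}=\underline{\mathrm{Spec}}((f^\nu)_*\mathcal{O})$ finite over $\mathcal{D}_{\Gamma,M}$. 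Since normalization in families and $(f^\nu)_*$ commute with base change, Lemma \ref{lemma1} gives $S_g\cong\mathcal{S}\times_{\mathcal{D}_{\Gamma,M}}\mathrm{Spec}(M')$ and $C_g\cong\mathcal{C}\times_{\mathcal{D}_{\Gamma,M}}\mathrm{Spec}(M')$, so $C_g$ is literally the restriction of $\mathcal{C}\to\mathcal{S}$ along the point determined by $\gamma$.

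Next I record the geometry of the base. By admissibility $\mathcal{D}_{\Gamma,M}$ is smooth over $M$, and $\mathcal{S}\to\mathcal{D}_{\Gamma,M}$ is finite étale of degree $|V|$: after the $\mathrm{Aut}(\Gamma)$-torsor $\mathcal{M}_\Gamma\to\mathcal{D}_{\Gamma,M}$ the normalized family becomes a disjoint union of $|V|$ families of $\mathbb{P}^1$'s, so $(f^\nu)_*\mathcal{O}$ is a rank-$|V|$ étale algebra. Hence $\mathcal{S}$ is a smooth, in particular regular, Deligne--Mumford stack over $M$; the transitivity of $\mathrm{Aut}(\Gamma)$ on $V$ makes it integral, with trivial generic stabilizer and function field the field $K_{\Gamma,M}$ over which the universal conic $C_{\Gamma,M}$ lives. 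Because each geometric fibre of $\mathcal{C}\to\mathcal{S}$ is a smooth conic, $\mathcal{C}\to\mathcal{S}$ is a Severi--Brauer scheme of relative dimension one, hence determines a $2$-torsion class $\alpha\in\mathrm{Br}(\mathcal{S})$ whose vanishing is equivalent to $\mathcal{C}\to\mathcal{S}$ being a Zariski-locally trivial $\mathbb{P}^1$-bundle.

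The crux is a purity statement. The generic fibre of $\mathcal{C}\to\mathcal{S}$ is exactly $C_{\Gamma,M}/K_{\Gamma,M}$, so the hypothesis that $C_{\Gamma,M}$ splits says precisely that $\alpha$ maps to $0$ under the restriction $\mathrm{Br}(\mathcal{S})\to\mathrm{Br}(K_{\Gamma,M})$. For a regular integral base with trivial generic stabilizer this restriction is injective (the Auslander--Goldman theorem for regular local rings, globalized by Grothendieck's purity for the Brauer group), whence $\alpha=0$ and $\mathcal{C}\to\mathcal{S}$ is a $\mathbb{P}^1$-bundle. Restricting along $\gamma$, the conic $C_g\to S_g$ is then a $\mathbb{P}^1$-bundle over $S_g$, which is a finite product of separable field extensions of $M'$; a $\mathbb{P}^1$-bundle over a field has a rational point over each factor, so $C_g$ splits, as claimed.

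The step I expect to be the main obstacle is precisely the \emph{global} injectivity $\mathrm{Br}(\mathcal{S})\hookrightarrow\mathrm{Br}(K_{\Gamma,M})$ for the genuinely stacky base $\mathcal{S}$. One cannot avoid the stack by passing to a regular scheme model over $M'$: lifting $\gamma$ to the scheme $\mathcal{M}_\Gamma$ would require trivializing the monodromy torsor of $X$, which is nontrivial in general, and a chart only yields splitting of $C_g$ after a nontrivial extension of $M'$ rather than over $M'$ itself. A Brauer class could in principle be supported on the locus of curves with extra automorphisms and be invisible at the generic point, so this injectivity must be argued, not assumed. The way I would make it transparent is to use the presentation $\mathcal{S}=[(\coprod_V\mathcal{M}_\Gamma)/\mathrm{Aut}(\Gamma)]$: since the conic bundle is split on $\mathcal{M}_\Gamma$ (the marked points give sections), $\alpha$ is an equivariant class, and its restriction to the $M'$-point factors through the monodromy homomorphism $\mathrm{Gal}(M')\to\mathrm{Aut}(\Gamma)$; functoriality of group cohomology then shows that triviality of the universal class forces triviality of every specialization, which is the content of the purity input above and is the transition to the equivariant/Brauer-class computation carried out in the later sections.
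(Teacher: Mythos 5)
Your overall strategy is genuinely different from the paper's, and it has a gap at exactly the point you yourself flag as ``the main obstacle.'' You reduce everything to the injectivity of $\mathrm{Br}(\mathcal{S})\to\mathrm{Br}(K_{\Gamma,M})$ for the stacky base $\mathcal{S}\cong[\mathcal{M}_\Gamma/\mathrm{Stab}(v)]$. Grothendieck--Auslander--Goldman purity is a theorem about regular integral \emph{schemes}; for a Deligne--Mumford stack with nontrivial stabilizers in codimension $\geq 1$ it is not automatic, precisely because a class can be detected on a residual gerbe $B_{\kappa(x)}G_x$ at a stacky point while dying at the (representable) generic point. The repair you sketch --- that $\alpha$ is split by the $\mathrm{Aut}(\Gamma)$-cover $\mathcal{M}_\Gamma\to\mathcal{S}$, hence ``equivariant,'' and that ``functoriality of group cohomology'' transfers triviality at the generic point to triviality at every $M'$-point --- is not a proof. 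Via Hochschild--Serre such a class lives (modulo $\mathrm{H}^1(\mathrm{Stab}(v),\mathrm{Pic}(\mathcal{M}_\Gamma))$) in $\mathrm{H}^2(\mathrm{Stab}(v),\Gamma(\mathcal{M}_\Gamma,\mathbb{G}_m))$, and the restriction map from there to $\mathrm{H}^2(\mathrm{Stab}(v),(k')^{*})$ at the generic point is $\mathrm{H}^2$ of an inclusion of coefficient modules, which has no reason to be injective; nor is the comparison with an arbitrary $M'$-point (whose monodromy $\mathrm{Gal}(M')\to\mathrm{Aut}(\Gamma)$ need not factor through anything related to the generic point) carried out. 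A secondary, smaller issue: even granting $\alpha=0$ in $\mathrm{H}^2(\mathcal{S},\mathbb{G}_m)$, concluding that $\mathcal{C}=\mathbb{P}(E)$ for a vector bundle $E$ on the stack needs the class to come from an Azumaya algebra on $\mathcal{S}$, which should be said (it does, via the $\mathrm{PGL}_2$-torsor of the conic bundle, but it is a step).

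The paper sidesteps all of this with a specialization argument that never leaves the world of regular schemes: using \cite[0DR0]{SP} it produces a flat map $\mathrm{Spec}(M[[t_1,\dots,t_d]])\to\mathcal{D}_\Gamma$ centered at $[X]$ whose generic point hits the generic point of $\mathcal{D}_\Gamma$; the conic over $\mathrm{Frac}(M[[t_1,\dots,t_d]])$ is then a base change of the split $C_{\Gamma,M}$, and a rational point is propagated to the closed fiber $C_g$ through the chain of discrete valuation rings $M((t_1,\dots,t_{i-1}))[[t_i]]$ by properness of the conic. If you want to salvage your route, you would need to either prove the Brauer injectivity for $[\mathcal{M}_\Gamma/\mathrm{Stab}(v)]$ honestly, or notice that you only ever need to compare the generic point with one chosen point $[X]$ --- at which stage the formal neighbourhood of $[X]$ does the job and your global purity statement becomes unnecessary.
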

\begin{proof}By Lemma \ref{lemma1}, we may assume $M=M'$ and work over $M$ in the rest of the proof.

Let $\mathcal{D}_\Gamma$ be the moduli stack of totally degenerate curves with dual graph $\Gamma$. Let $f\colon \mathcal{X}_\Gamma\to\mathcal{D}_\Gamma$ be the universal curve. We know $\mathcal{D}_{\Gamma}$ is a smooth Deligne-Mumford stack, because it is explicitly presented by $[\mathcal{M}_\Gamma/\mathrm{Aut}(\Gamma)]$.

Let $[X]$ be the closed point in $\mathcal{D}_\Gamma$ corresponding to the curve $X$. Let $d=\mathrm{dim}(\mathcal{D}_\Gamma)$. By \cite[0DR0]{SP}, there exists a flat morphism $\phi\colon\mathrm{Spec}(M[[t_1,\dots,t_d]])\to\mathcal{D}_\Gamma$ centered at $[X]$. Flatness implies that generic point maps to generic point.

Let $f_\phi$ be the family of curves over $\mathrm{Spec}(M[[t_1,\dots,t_d]])$ induced by $\phi$.
Since $C_{\Gamma,M}$ splits, by Lemma \ref{lemma1}, we know the conic $C_{f_\phi}$ also splits. We hope to specialize the sections of $C_{f_\phi}$ to $C_g$. Consider the successive specialization of discrete valuation rings: $$M((t_1,\dots,t_{d-1}))[[t_d]]\rightsquigarrow M((t_1,\dots,t_{d-2}))[[t_{d-1}]]\rightsquigarrow\cdots\rightsquigarrow M[[t_1]].$$
By \cite[9.7]{H}, a section on $C_{f_\phi}$ is inherited by taking closure in each specialization step, which finally yields a section of $C_g$.
\end{proof}

\section{Clutching Curves}\label{sec4}
By the reductions in the last section, it suffices to construct certain totally degenerate curve with non-split associated conic. In this section, we study how to reverse-engineer totally degenerate curves from the conics.

The reverse process of normalization of nodal curves is called clutching, see \cite[X.7]{ACGH}. We explain the process carefully, since we are not working over algebraically closed fields. 
Here is our setup: $$\xymatrix{\mathrm{Spec}(k_5)\ar[r]&\mathrm{Spec}(k_3)\ar[r]^{m\colon1}\ar[d]^{2\colon1}& \mathrm{Spec}(k_2)\ar[d]^{n\colon1} \\
&\mathrm{Spec}(k_4)\ar[r] & \mathrm{Spec}(k_1).}
\eqno{(\dagger)}$$
Let $k_1$ be a field, let $k_2$ be a finite extension of $k_1$. Let $n=[k_2\colon k_1]$. Let $C$ be a conic over $k_2$. Let $i_P\colon P\hookrightarrow C$ be a closed point with residue field $k_3$. Let $m=[k_3\colon k_2]$. Let $k_4$ be an intermediate field between $k_1$ and $k_3$, such that $[k_3\colon k_4]=2$. Let $k_5/k_1$ be the normal closure of $k_3/k_1$. Let's denote the Galois groups $\mathrm{Gal}(k_5/k_i)$ by $G_i$.

The incidence of geometric points in our setup $(\dagger)$ produces a graph $\Gamma_{\dagger}$, with
vertex set $V_\dagger=G_1/G_2$ and edge set $E_\dagger=G_1/G_4$. Two vertices $v,w\in V_\dagger=G_1/G_2$ are connected by an edge $e$ if their pre-images in $G_1/G_3$ map to the same element $e\in E_\dagger=G_1/G_4$. Using our previous notation, there are $n$ vertices in $\Gamma_\dagger$, each vertex has degree $m$.


\begin{example}\label{ex2}Let $\Gamma=(V,E)$ be a graph, such that each vertex has degree at least $4$. Assume $\mathrm{Aut}(\Gamma)$ acts transitively on vertices and on edges. Let $k_5$ be a field with faithful $\mathrm{Aut}(\Gamma)$-action.
Pick an edge $e_0$ with vertices $v_0,v_1$.
Let $G_1=\mathrm{Aut}(\Gamma)$.  Let $G_2=\mathrm{Stab}(\{v_0\})$, $G_3=\mathrm{Stab}(\{v_0\})\cap\mathrm{Stab}(\{e_0\})$ and $G_4=\mathrm{Stab}(\{e_0\})$. Take $k_i=k_5^{G_i}$ in $(\dagger)$, keeping track of the cosets $G_1/G_i$, one verifies that $\Gamma_\dagger=\Gamma$.
\end{example}

Let's explain the clutching construction. By \cite[3.4]{Coproduct}, we may take push-out diagram of the closed immersion $i_P\colon \mathrm{Spec}(k_3)\to C$ and the double cover $h\colon \mathrm{Spec}(k_3)\to\mathrm{Spec}(k_4)$ in the category of schemes over $k_1$. Locally, the co-product $$Y=C\coprod_{i_P,\mathrm{Spec}(k_3),f}\mathrm{Spec}(k_4)$$ is constructed by taking spectrum of fiber product of rings. Note that in this case, the push-out diagram is also cartesian:
\begin{lemma}\label{next}
Let $A, B$ be commutative rings. Let $I$ be an ideal of $A$. Let $B\hookrightarrow A/I$ be a subring. Let $C=A\times_{A/I}B$ be the fibred product of rings. Then $A/I=A\otimes_CB$, where $C\to A$ and $C\to B$ are given by projections.
\end{lemma}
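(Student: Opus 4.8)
The plan is to make the fibre product concrete and then reduce the tensor product $A\otimes_C B$ to a single standard base-change computation. Write $\iota\colon B\hookrightarrow A/I$ for the given subring inclusion and let $\bar a$ denote the reduction of $a\in A$ modulo $I$, so that by definition $C=\{(a,b)\in A\times B:\bar a=\iota(b)\}$, with $C\to A$ and $C\to B$ the two projections. Everything will hinge on two features of the data: the injectivity of $\iota$, and the surjectivity of the quotient map $A\twoheadrightarrow A/I$.

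First I would identify $C$ with a subring of $A$ via the first projection $\mathrm{pr}_A\colon(a,b)\mapsto a$. Its kernel is $\{(0,b):\iota(b)=0\}$, which vanishes because $\iota$ is injective; hence $\mathrm{pr}_A$ realizes $C$ as the subring $A':=\{a\in A:\bar a\in\iota(B)\}\subseteq A$, namely the preimage of $\iota(B)$ under $A\twoheadrightarrow A/I$. Since $0=\iota(0)$ we have $I\subseteq A'$, and because $I$ is an ideal of $A$ it is \emph{a fortiori} an ideal of the subring $A'$. Next I would identify $B$ with $A'/I$ through the second projection $\mathrm{pr}_B\colon(a,b)\mapsto b$: this map is surjective, because for any $b\in B$ the surjectivity of $A\to A/I$ yields an $a$ with $\bar a=\iota(b)$ and hence $(a,b)\in C$; and its kernel is $I\times\{0\}$, which corresponds to the ideal $I\subseteq A'$ under $C\cong A'$. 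Therefore $B\cong A'/I$ as $A'$-algebras, compatibly with the structure map used in the tensor product.

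With these identifications in hand the conclusion is immediate: $A\otimes_C B=A\otimes_{A'}(A'/I)$, and base change of a quotient gives $A\otimes_{A'}(A'/I)=A/(I\cdot A)$, the quotient of $A$ by the ideal generated by the image of $I$. But $I$ is already an ideal of $A$, so $I\cdot A=I$ and we obtain $A\otimes_C B\cong A/I$, as claimed. I do not expect a genuine obstacle here; once the two structural identifications $C\cong A'$ and $B\cong A'/I$ are set up, the argument is pure bookkeeping, and the only places demanding care are exactly the use of $\iota$ injective (to embed $C$ in $A$) and of $A\to A/I$ surjective (to get $\mathrm{pr}_B$ surjective).
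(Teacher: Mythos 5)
Your proof is correct, but it takes a different route from the paper's. The paper argues directly on the tensor product: it defines $f\colon A\otimes_C B\to A/I$ by $a\otimes b\mapsto \overline{a}\,\iota(b)$ and $g\colon A/I\to A\otimes_C B$ by $\overline{a}\mapsto a\otimes 1$, checks that $g$ is well defined (using $i\otimes 1=(1\otimes 1)\cdot 0=0$ for $i\in I$), and verifies that $f$ and $g$ are mutually inverse. You instead first untangle the fibre product structurally: the injectivity of $\iota$ lets you identify $C$ via $\mathrm{pr}_A$ with the preimage subring $A'=\{a\in A:\bar a\in\iota(B)\}$, the surjectivity of $A\to A/I$ lets you identify $B$ via $\mathrm{pr}_B$ with $A'/I$ as an $A'$-algebra, and then the claim collapses to the standard base-change computation $A\otimes_{A'}(A'/I)\cong A/(I\cdot A)=A/I$. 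Both arguments are elementary and complete; the paper's is shorter and self-contained, while yours makes visible exactly where the two hypotheses (injectivity of $\iota$, surjectivity of the quotient map) enter and replaces the ad hoc well-definedness check for $g$ with a citation to a standard fact. Your identifications are also compatible with the structure maps, as you note, so the resulting isomorphism agrees with the natural map $a\otimes b\mapsto\overline{a}\,\iota(b)$.
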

\begin{proof} Consider the map $f\colon A\otimes_CB\to A/I$ given by $(a,b)\mapsto \overline{ab}$, one checks the map $f$ is a surjective ring homomorphism. Consider the map $g\colon A/I\to A\otimes_CB$ given by $\overline{a}\mapsto a\otimes1$. This map is well defined because $i\otimes1=i(1\otimes1)=(1\otimes1)\cdot0=0$ for $i\in I$. It is easy to check $f,g$ are inverse maps to each other, hence we have desired isomorphism.
\end{proof}

\begin{proposition}
The scheme $Y$ is a totally degenerate curve over $k_1$ with dual graph $\Gamma_\dagger$.
\end{proposition}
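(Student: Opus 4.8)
The plan is to verify the defining properties of a totally degenerate curve after base change to a separable closure $\bar k=k_1^{\mathrm{sep}}$, where the conic splits and every $k_i$ decomposes into a product of copies of $\bar k$ indexed by the coset space $G_1/G_i$ (here $k_5\subseteq\bar k$ because $k_5/k_1$ is assumed Galois). The first step is to show that the clutching construction commutes with this base change. Working affine-locally, write the push-out as $\mathrm{Spec}(R)$ with $R=A\times_{k_3}k_4$, where $A$ is the coordinate ring of a neighborhood of $P$ in $C$ with $A\twoheadrightarrow k_3$ cutting out $P$, and $k_4\hookrightarrow k_3$. By Lemma \ref{next} this push-out is cartesian, and there is a short exact sequence of $k_1$-modules $0\to A\times_{k_3}k_4\to A\times k_4\to k_3\to 0$, where the surjection sends $(\alpha,\beta)\mapsto\bar\alpha-\beta$; surjectivity uses that $A\to k_3$ is onto. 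Tensoring with the flat extension $\bar k/k_1$ preserves exactness, which identifies $R\otimes_{k_1}\bar k$ with $(A\otimes\bar k)\times_{k_3\otimes\bar k}(k_4\otimes\bar k)$. Hence $Y_{\bar k}$ is again the push-out of $C_{\bar k}$ and $\mathrm{Spec}(k_4\otimes\bar k)$ along $\mathrm{Spec}(k_3\otimes\bar k)$, and it suffices to analyze $Y_{\bar k}$.

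Next I would describe $Y_{\bar k}$ explicitly. Over $\bar k$ the conic splits, so $C_{\bar k}=\coprod_{v\in V_\dagger}\mathbb{P}^1_{\bar k}$ with $V_\dagger=G_1/G_2$; the reduced point $P$ becomes $|G_1/G_3|$ distinct $\bar k$-points, exactly $m=[G_2:G_3]$ of them on each component; and $\mathrm{Spec}(k_4)_{\bar k}$ becomes the $|G_1/G_4|$ points indexed by $E_\dagger=G_1/G_4$. The double cover $h$ base-changes to the projection $G_1/G_3\to G_1/G_4$, which is $2$-to-$1$ because $[G_4:G_3]=2$. Therefore the push-out glues the geometric points of $P$ in pairs, one pair over each $e\in E_\dagger$, and the two points of a pair lie on the two components obtained as the images of these cosets under $G_1/G_3\to G_1/G_2$ — precisely the edge-incidence rule defining $\Gamma_\dagger$.

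Then I would check the local structure at each glued point. Identifying two distinct smooth $\bar k$-points $q_1,q_2$ of the $\mathbb{P}^1$'s (whether on different components or on the same one, as in a loop) produces the local ring $\{(f_1,f_2):f_1(q_1)=f_2(q_2)\}$, whose completion is $\bar k[[x,y]]/(xy)$, the standard node. Away from these finitely many points $Y_{\bar k}$ agrees with the smooth curve $C_{\bar k}$, so its only singularities are nodes and each irreducible component is a $\mathbb{P}^1$. Every component carries $m\ge 3$ branch points, giving the required stability/three-branch condition, and $Y_{\bar k}$ is connected exactly when $\Gamma_\dagger$ is, which holds in the situations we apply this to. Finally, finiteness of $C\to Y$ together with properness of $C/k_1$ yields properness of $Y/k_1$. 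Assembling these geometric facts shows $Y$ is a totally degenerate curve over $k_1$ whose dual graph is $\Gamma_\dagger$.

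The step I expect to be the main obstacle is the base-change compatibility of the push-out, since fiber products of rings do not in general commute with tensor products; this is exactly what the flatness/exact-sequence argument resting on Lemma \ref{next} resolves. The accompanying bookkeeping subtlety is to track the degree-$2$ cover correctly, so that \emph{pairs} of geometric points — not single points — are clutched into nodes, matching the combinatorial definition of $\Gamma_\dagger$.
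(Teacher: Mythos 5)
Your proposal is correct and follows essentially the same route as the paper: identify the geometric components and nodes of $Y$ with the coset spaces $G_1/G_2$ and $G_1/G_4$ via the Galois correspondence (equivalently, by base-changing everything to $k_1^{\mathrm{sep}}$) and match the incidence data with the definition of $\Gamma_\dagger$. The only difference is that you spell out two points the paper leaves implicit — that the fibred-product construction commutes with the flat base change to $\bar k$, and the local computation that each glued pair yields a node — both of which are verified correctly.
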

\begin{proof} By the previous lemma, it suffices give a natural identification of components and nodes of $Y$ with $V_\dagger$ and $E_\dagger$, which preserve the incidence relations. Note that the components are indexed by $\mathrm{Spec}(k_4)$, the nodes are indexed by $\mathrm{Spec}(k_2)$, the nodes pulled back to normalization is induced by $\mathrm{Spec}(k_3)$. We conclude by the Galois correspondence between sub-extensions of $k_5/k_1$ and $G_1$-sets.
\end{proof}

\begin{corollary}\label{prop3} In the setting of Example \ref{ex2}, suppose there exists an index $2$ element in $$\mathrm{Br}(k_5^{G_3}/k_5^{G_2})=\mathrm{Ker}(\mathrm{Br}(k_5^{G_2})\to\mathrm{Br}(k_5^{G_3})),$$ then there exist a totally degenerate curve over $k_1$, with dual graph $\Gamma$ and non-split associated conic.
\end{corollary}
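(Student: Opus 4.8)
The plan is to run the clutching construction of Example~\ref{ex2} in reverse: out of the hypothesised Brauer class I will manufacture a non-split conic $C$ over $k_2:=k_5^{G_2}$ carrying a closed point of residue field exactly $k_3:=k_5^{G_3}$, feed the pair $(C,P)$ into diagram $(\dagger)$, and then verify that the totally degenerate curve it produces has associated conic equal to $C$, hence non-split.

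First, from the class to the conic. Let $\alpha\in\mathrm{Br}(k_2)$ be the given element of index $2$ lying in $\ker(\mathrm{Br}(k_2)\to\mathrm{Br}(k_3))$. Having index $2$, $\alpha$ is represented by a quaternion division algebra, so its Severi--Brauer variety $C=\mathrm{SB}(\alpha)$ is a smooth conic over $k_2$; and $C$ is non-split precisely because $\alpha\neq 0$. The condition $\alpha\in\mathrm{Br}(k_3/k_2)$ means $\alpha_{k_3}=0$, i.e. $C_{k_3}\cong\mathbb{P}^1_{k_3}$, so $C(k_3)\neq\varnothing$.

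Second --- and this is where I expect the real work to lie --- I must upgrade ``$C$ has a $k_3$-point'' to ``$C$ has a closed point $P$ with $\kappa(P)\cong_{k_2}k_3$'', since the dual graph computed in Example~\ref{ex2} depends on the residue field being exactly $k_3=k_5^{G_3}$ (a point landing in a proper subfield would correspond to a larger stabiliser, hence a different graph; note in the intended applications $m=[k_3:k_2]=4$ has proper quadratic subextensions, which $\alpha$ may well also kill, so this issue is genuine). The approach: any $x\in C(k_3)$ maps to a closed point $P_x$ with $\kappa(P_x)\subseteq k_3$, and writing $\sigma_1,\dots,\sigma_m\colon k_3\hookrightarrow\overline{k_2}$ for the distinct $k_2$-embeddings, one has $\kappa(P_x)=k_3$ exactly when the $m$ conjugate geometric points ${}^{\sigma_1}x,\dots,{}^{\sigma_m}x$ are pairwise distinct. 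Trivialising $C_{k_3}\cong\mathbb{P}^1_{k_3}$ identifies $C(k_3)$ with $\mathbb{P}^1(k_3)$; for each pair $i\neq j$ the collision locus $\{x:{}^{\sigma_i}x={}^{\sigma_j}x\}$ is a proper subset of $\mathbb{P}^1(k_3)$, for otherwise the two distinct embeddings $\sigma_i,\sigma_j$ would differ by a fixed Möbius transformation along all of the infinite set $k_3$, which is impossible for field homomorphisms. As $k_2$ is infinite (else $\mathrm{Br}(k_2)=0$ and the hypothesis is empty) and there are only finitely many such loci, a point $x$ avoiding all of them exists, and $P:=P_x$ has residue field $k_3$.

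Finally, with $(C,P)$ constructed I set $k_4=k_5^{G_4}$, note $[k_3:k_4]=[G_4:G_3]=2$, and assemble diagram $(\dagger)$; Example~\ref{ex2} gives $\Gamma_\dagger=\Gamma$ and the proposition above yields a totally degenerate curve $Y=C\amalg_{\mathrm{Spec}(k_3)}\mathrm{Spec}(k_4)$ over $k_1$ with dual graph $\Gamma$. It remains to compute its associated conic. By Lemma~\ref{next} the gluing is, locally at the glued point, exactly the formation of a node, so normalisation un-glues it; the coprojection $C\to Y$, which is finite and an isomorphism over the complement of the glued point, exhibits $Y^\nu=C$ as $k_1$-schemes ($C$ being smooth). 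Hence $(f^\nu)_*\mathcal{O}_{Y^\nu}=\Gamma(C,\mathcal{O}_C)=k_2$, since $C$ is a geometrically integral conic over $k_2$, and so by the definition in Section~\ref{sec2} the associated conic $\mathrm{St}\colon C_f\to S_f$ is identified with $C\to\mathrm{Spec}(k_2)$, which is non-split. This produces the required curve and proves the corollary.
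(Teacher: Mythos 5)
Your construction is the same as the paper's: represent the class by a conic $C/k_2$, find a closed point $P$ on $C$ with residue field $k_3$, and clutch along the double cover $P\to\mathrm{Spec}(k_4)$; the paper's own proof is exactly these three sentences. You correctly isolate the one substantive point, namely that $P$ must have residue field \emph{equal} to $k_3$ and not a proper subfield (the paper simply asserts such a $P$ exists), and your criterion (all $m$ conjugate geometric points of a $k_3$-point distinct) and your proof that each individual collision locus is a proper subset of $C(k_3)$ (the M\"obius argument, pinned down at $0,1,\infty$) are both correct. Your verification at the end that $Y^\nu=C$ and $\Gamma(C,\mathcal{O}_C)=k_2$, so the associated conic is $C/k_2$ itself, is also correct and more careful than the paper.

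The gap is the final inference of the avoidance argument: from ``each of the finitely many collision loci is a proper subset of the infinite set $C(k_3)$'' you cannot conclude that their union is proper --- an infinite set is easily a finite union of proper subsets ($\mathbb{Z}$ is two congruence classes), and here each nonempty locus $S_{ij}$ is itself infinite, being the image of $C(F_{ij})\cong\mathbb{P}^1(F_{ij})$ where $F_{ij}\subsetneq k_3$ is the subfield on which $\sigma_i$ and $\sigma_j$ agree. What you actually need is that the loci are proper \emph{Zariski-closed} conditions and that the relevant rational points are Zariski dense. A clean repair: let $W=\mathrm{Res}_{k_3/k_2}(C_{k_3})$, so $W(k_2)=C(k_3)$ and $W_{\overline{k_2}}\cong\prod_{i=1}^{m}C_{\overline{k_2}}$, the $i$-th projection of $x\in W(k_2)$ being ${}^{\sigma_i}x$. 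The union of the pairwise diagonals is Galois-stable and descends to a proper closed subscheme $\Delta\subsetneq W$; since $W$ contains $\mathrm{Res}_{k_3/k_2}(\mathbb{A}^1_{k_3})\cong\mathbb{A}^m_{k_2}$ as a dense open and $k_2$ is infinite, $W(k_2)$ is Zariski dense and hence meets $W\setminus\Delta$, and any such point gives the desired $P$. With that substitution your proof is complete and follows the paper's route.
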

\begin{proof}
Pick a conic $C/k_5^{G_2}$ corresponding to the Brauer class, we have $$C_{k_5^{G_3}}\cong\mathbb{P}^1_{k_5^{G_3}}.$$ Take a closed point $Q$ on $C$ with residue field $k_5^{G_3}$. Take the push-out scheme $Y$ of $Q\to C$ and $Q\to\mathrm{Spec}(k_5^{G_2})$. Then $Y$ is the totally degenerate curve over $k_1$ satisfying required conditions.
\end{proof} 

\section{The Key Lemma}\label{sec5}
\begin{lemma}\label{keylem}Let $N/K$ be a Galois extension of global fields, with Galois group $G$. Let $H$ be a subgroup of $G$. Let $L=N^H$. Suppose there exists an element $g_0$ in $G$ such that the $g_0$-orbit of $G/H$ are all in even size, then $$\mathrm{Br}(L/K)[2]:=\mathrm{Ker}(\mathrm{Br}(K)\to\mathrm{Br}(L))[2]\neq0.$$
\end{lemma}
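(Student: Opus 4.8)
The plan is to produce the desired $2$-torsion class by hand, using the fundamental exact sequence of class field theory together with the Chebotarev density theorem. Recall that for a global field $K$ the Brauer group sits in the exact sequence
$$0\to \mathrm{Br}(K)\to\bigoplus_v\mathrm{Br}(K_v)\xrightarrow{\sum_v\mathrm{inv}_v}\mathbb{Q}/\mathbb{Z}\to0,$$
so a class $\alpha\in\mathrm{Br}(K)$ is determined by its local invariants $\mathrm{inv}_v(\alpha)\in\mathbb{Q}/\mathbb{Z}$, which may be prescribed arbitrarily subject only to the conditions that almost all vanish and that they sum to zero. I would prescribe invariants taking values in $\{0,\tfrac12\}$, so that the resulting class is automatically killed by $2$.

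The key point is to understand when such a class splits over $L$. For a place $w$ of $L$ above a place $v$ of $K$, restriction multiplies invariants by the local degree, $\mathrm{inv}_w(\mathrm{Res}_{L/K}\alpha)=[L_w:K_v]\cdot\mathrm{inv}_v(\alpha)$; hence $\alpha$ splits over $L$ precisely when, at every $v$ with $\mathrm{inv}_v(\alpha)=\tfrac12$, all the local degrees $[L_w:K_v]$ are even. Now fix a place $v$ of $K$ unramified in $N$, and let $D_v\subset G$ be a decomposition group. The places $w$ of $L=N^H$ above $v$ are in bijection with the orbits of $D_v$ acting on $G/H$, and under this bijection $[L_w:K_v]$ equals the size of the corresponding orbit. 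Therefore all local degrees at $v$ are even as soon as every $D_v$-orbit on $G/H$ has even size, which ties the splitting condition directly to the orbit hypothesis.

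It remains to produce enough such places. Since $v$ is unramified, $D_v$ is cyclic, generated by a Frobenius element $\mathrm{Frob}_v$, and if $\mathrm{Frob}_v$ is conjugate to $g_0$ then $D_v$ is conjugate to $\langle g_0\rangle$, so the two subgroups have the same orbit sizes on $G/H$; by hypothesis these are all even. By Chebotarev there are infinitely many finite places $v$ of $K$, unramified in $N$, with $\mathrm{Frob}_v$ conjugate to $g_0$. I would choose two of them, $v_1$ and $v_2$, and let $\alpha\in\mathrm{Br}(K)$ be the class with $\mathrm{inv}_{v_1}(\alpha)=\mathrm{inv}_{v_2}(\alpha)=\tfrac12$ and $\mathrm{inv}_v(\alpha)=0$ otherwise; the invariants sum to $\tfrac12+\tfrac12=0$, so $\alpha$ exists. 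It is nonzero since $\mathrm{inv}_{v_1}(\alpha)\neq0$, it is $2$-torsion, and by the previous paragraph it splits over $L$, so $\alpha\in\mathrm{Br}(L/K)[2]$ is the required nonzero class. The main things to get right are the dictionary between local degrees and $D_v$-orbit sizes at unramified places, and the observation that the global reciprocity constraint forces an even number of nonzero invariants, so that at least \emph{two} Frobenius-$g_0$ places are needed — both of which Chebotarev supplies with room to spare.
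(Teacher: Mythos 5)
Your proposal is correct and follows essentially the same route as the paper: the fundamental exact sequence for $\mathrm{Br}(K)$, restriction as multiplication by local degrees, the identification of places of $L$ over an unramified $v$ with decomposition-group orbits on $G/H$ (with local degree equal to orbit size), and Chebotarev to supply two places whose Frobenius is conjugate to $g_0$, at which you place invariants $\tfrac12$. The only cosmetic difference is that you restrict attention to unramified places from the outset, while the paper writes the local degree as $e_{(w|v)}f_{(w|v)}$ in general before specializing; the substance is identical.
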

\begin{proof}Let's denote the set of places in $K$ and $L$ by $S_K$ and $S_L$. By the local-global principle for Brauer groups \cite[VII.10]{CF}, we have the following commutative diagram
$$\xymatrix{
0\ar[r] &\mathrm{Br}(K)\ar[r]\ar[d] &\bigoplus_{v\in S_K}\mathrm{Br}(K_v)\ar[r]^{\oplus\mathrm{inv}_v}_{\sim}\ar[d]&\bigoplus_{v\in S_K}\mathbb{Q/\mathbb{Z}}\ar[r]^-{\sum}\ar[d]^{\rho} &\mathbb{Q}/\mathbb{Z}\ar[d]^{[L\colon K]}\ar[r] &0\\
0\ar[r] &\mathrm{Br}(L)\ar[r] &\bigoplus_{w\in S_L}\mathrm{Br}(L_w)\ar[r]^{\oplus\mathrm{inv}_w}_{\sim}&\bigoplus_{v\in S_L}\mathbb{Q}/\mathbb{Z}\ar[r]^-{\sum} &\mathbb{Q}/\mathbb{Z}\ar[r] &0}$$
If a place $w\in S_L$ lies over place $v\in S_K$, we write $w|v$, and use $f_{(w|v)},e_{(w|v)}$ to denote the residue degree and inertia degree. For a fixed $v\in S_K$, we know $$\sum_{w|v} e_{(w|v)}f_{(w|v)}=[L\colon K].$$ Let's use invariant map to identify Brauer group of local fields with $\mathbb{Q}/\mathbb{Z}$. The restriction map translates to a map $\rho\colon (\mathbb{Q}/\mathbb{Z})^{\oplus S_K}\to(\mathbb{Q}/\mathbb{Z})^{\oplus S_L}$ given by
$$(0,\dots,1_v,\dots,0)\mapsto\sum_{w|v}(0,\dots,e_{(w|v)}f_{(w|v)}\cdot 1_w,\dots,0)$$

We are done if there exist two places $v_1,v_2\in S_K$, such that $f_{(w|v_i)}$ is even for any $w|v_i$, since $(0,\dots,(1/2)_{v_1},\dots,0,\dots,(1/2)_{v_2},\dots,0)$ would be a nonzero class in $\mathrm{Br}(L/K)[2]$.

We show our condition implies the existence of such places.

Let $\mathfrak{p}$ be a prime in $\mathcal{O}_K$ which is unramified in $\mathcal{O}_N$. Let $\mathfrak{Q}\subset \mathcal{O}_N$ be a prime lying over $\mathfrak{p}$, with decomposition group $G_{\mathfrak{Q}}$.
Let $\{\mathfrak{P}_i\}_{i=1}^s$ be all primes in $\mathcal{O}_L$ lying over $\mathfrak{p}$.

Note that have natural bijections: (see \cite[I.9]{Neuk}) $$\xymatrix{\{\mathfrak{P}_i\}_{i=1}^s\ar@{<->}[rrr]^{\mathfrak{P}_i=g\mathfrak{Q}\cap L\mapsto (G_{\mathfrak{Q}})gH}&&& G_{\mathfrak{Q}}\backslash G/H \ar@{<->}[rrr]^{(G_{\mathfrak{Q}})gH\mapsto\mathrm{Orb}_{(G_{\mathfrak{Q}})}(\overline{g})\ \ \ \ }&&&\textrm{$G_{\mathfrak{Q}}$-orbits of}\ G/H}.$$
Under this bijection, the residue degree $f_{(\mathfrak{P}_i|\mathfrak{p})}=|\mathrm{Orb}_{(G_{\mathfrak{Q}})}(\overline{g})|$.
Suppose there exists an element $g_0\in G$ such that all $\langle g_0\rangle$-orbits of $G/H$ are of even size. By Chebotarev's theorem \cite[VII.13.4]{Neuk}, the primes in $\mathcal{O}_N$ whose Frobenius elements are conjugate to $g_0$ has positive Dirichlet density. Hence there are infinitely many primes $\mathfrak{Q}\in\mathcal{O}_N$ whose decomposition group $G_{\mathfrak{Q}}$ is isomorphic to $\langle g_0\rangle$. Pick any two such primes $\mathfrak{Q}_1,\mathfrak{Q}_2$ over different primes in $\mathcal{O}_K$, then $\mathfrak{Q}_i\cap \mathcal{O}_K$ give the desired places of $K$.
\end{proof}

\begin{remark}\label{rm} The lemma holds if $G\backslash\bigcup_{g\in G}g^{-1}Hg$ contains an element $g_0$ whose order is a power of $2$. Here is the reason: For such $g_0$, the only possibility to have an odd size $\langle g_0\rangle$-orbit in $G/H$ is an orbit of length $1$. This means $g_0gH=gH$, or equivalently $g_0\in gHg^{-1}$ for some $g\in G$. Specifically, the condition holds if $G$ has an order $2^s$ element but $H$ does not.
\end{remark}
\begin{remark}\label{keyrem} Let $G=S_4, H=S_3$. Let $K$ be a global field with faithful $S_4$-action, then $\mathrm{Br}(K^{S_3}/\mathrm{K^{S_4}})[2]\neq0$, since $S_4$ has an order $4$ element but $S_3$ does not.
\end{remark}
\begin{remark} In general, for an even degree extension $L/K$, it is possible that $\mathrm{Br}(L/K)[2]=0$.
Let $K$ be some totally imaginary field, so archimedean places give no contribution. Let $N/K$ be an unramified extension with Galois group $A_4$, let $H$ be an order $2$ subgroup in $G$, let $L=N^H$. The local map for $\mathrm{Br}(K)\to\mathrm{Br}(L)$ are multiplication by the size of orbits. At each local place, the Frobenius element has order $1,2$ or $3$. If the Frobenius element has order $1,3$, then there is odd size orbit; in case the Frobenius element has order $2$, there still exists size $1$ orbit, since all the order $2$ elements in $A_4$ are conjugate, see Remark \ref{rm}.
\end{remark} 

\section{The Proof}\label{sec6}
We assemble the previous discussions and finish the proof.

\begin{proof}[Proof of Theorem \ref{main0}] Note that for any global field $M$, there exists a field extension $\widetilde{M}$, with faithful $\mathrm{Aut}(\Gamma)$ action, such that $M\subset \widetilde{M}^{\mathrm{Aut}(\Gamma)}$. For example, we may take the splitting field of a degree $n:=|\mathrm{Aut}(\Gamma)|$ polynomials over $M$ with general coefficients\footnote{This is a $S_n$-Galois extension, by Hilbert's irreducibility theorem, see \cite[9.4.2]{Lang}.}, then take invariant subfield of $\mathrm{Aut}(\Gamma)\subset S_{n}$.

One verifies Lemma \ref{keylem} holds for all cases in Theorem \ref{main0}: Case (2) follows from Remark \ref{keyrem}. In case (1) $G_2/G_3=\mathbb{Z}/2\mathbb{Z}$, in case (3) and (4), we have $G_2/G_3=(\mathbb{Z}/2\mathbb{Z})^{\oplus2}$, one can always find an order $2$ element in $G_2$ such whose orbits in $G_2/G_3$ are all of even size\footnote{Take the inverse image of $(\overline{0},\overline{1})$ in $G_3$, both of its orbits has size two.}. Thus there exists a nonzero class $\alpha\in \mathrm{Br}(\widetilde{M}^{S_3}/\widetilde{M}^{S_4})[2].$
Note that for Brauer classes over global fields, period equals index \cite[18.6]{pierce}, we know the class $\alpha$ has index $2$.

By Corollary \ref{prop3}, the index $2$ class $\alpha$ produces a totally degenerate curve $X$ over $\widetilde{M}^{S_5}$ with dual graph $\Gamma=K_5$, such that the associated conic is non-split.

By Proposition \ref{prop2}, the existence of $X$ implies that the conic $C_{\Gamma,M}$ is non-split. Up to here, our argument works for any global field $M$.

Finally, by Proposition \ref{prop1}, we know the conic $C_{\Gamma,k_0}$ is non-split.
\end{proof}

\section{Non-triviality of Picard torsors}\label{nontri}
Let $k$ be a field. Let $X$ be a totally degenerate nodal curve defined over $k$. Let $\mathrm{Pic}^{\mathbf{1}}_{X/k}$ be the component of the Picard scheme that parameterizes line bundles with degree one on each geometrical irreducible component. The scheme $\mathrm{Pic}^{\mathbf{1}}_{X/k}$ is a torsor of the Picard torus $\mathrm{Pic}^0_{X/k}$. We show that the torsor is non-trivial in certain cases.

\begin{lemma}\label{pictor} Let $C$ be a non-split conic defined over $k$. Let $P$ be a separable closed point of degree $2$ on $C$. Let $D$ be the cone on $P$, we identify $P$ with its section at infinity. Let $X=C\coprod_{P}D$, it is a genus $1$ curve with three rational components. Then $\mathrm{Pic}^{\mathbf{1}}_{X/k}$ does not have a rational point.
\end{lemma}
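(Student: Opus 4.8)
The plan is to show that the torsor $\mathrm{Pic}^{\mathbf 1}_{X/k}$ under the torus $\mathrm{Pic}^0_{X/k}$ has nonzero class in $H^1(k,\mathrm{Pic}^0_{X/k})$, and that this class is precisely the class of the conic $C$. I emphasize at the outset why a naive argument fails: a $k$-rational point $\xi\in\mathrm{Pic}^{\mathbf 1}_{X/k}(k)$ need \emph{not} come from an honest line bundle on $X$, the obstruction living in $\mathrm{Br}(k)$; indeed restricting $\xi$ to the component $C$ and applying functoriality of the Leray obstruction $\mathrm{Pic}_{X/k}(k)\to\mathrm{Br}(k)$ only shows $\partial_X(\xi)=[C]$, which is consistent with $\xi$ existing. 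So one cannot argue by producing a degree-$1$ line bundle on $C$; the statement is genuinely about triviality of a torsor, and I will work with the torus directly.

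First I would record the combinatorics. Let $k'=\kappa(P)$, a separable quadratic extension with $\mathrm{Gal}(k'/k)=\langle\sigma\rangle$. Since $P\times_k k'=\mathrm{Spec}(k'\otimes_k k')$ is a pair of $k'$-rational points on $C_{k'}$, the conic splits over $k'$, so $C_{k'}\cong\mathbb P^1_{k'}$ and the cone $D_{k'}$ becomes two lines $\ell_1,\ell_2$ meeting at the $k$-rational vertex $v$. Thus $X_{k'}$ is a totally degenerate genus-$1$ curve with three $k'$-rational components $C,\ell_1,\ell_2$ and three $k'$-rational nodes $P_1,P_2,v$, whose dual graph $\Gamma$ is a triangle; here $\sigma$ interchanges the pairs $(\ell_1,P_1)\leftrightarrow(\ell_2,P_2)$ and fixes $C,v$.

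Second, I would identify the torus. Since all components are rational, $\mathrm{Pic}^0_{X_{k'}/k'}$ is the torus of $\Gamma$ with character lattice $H_1(\Gamma,\mathbb Z)\cong\mathbb Z$, generated by the triangle cycle $z\colon C\to\ell_1\to\ell_2\to C$. Because $\sigma$ swaps the two edges of $C$ and reverses the edge $\ell_1\ell_2$, it reverses this cycle, acting by $-1$ on $H_1(\Gamma,\mathbb Z)$. Hence
$$\mathrm{Pic}^0_{X/k}\;\cong\;T:=R^{(1)}_{k'/k}\mathbb G_m,$$
the norm-one torus. From the exact sequence $1\to T\to R_{k'/k}\mathbb G_m\xrightarrow{\ N\ }\mathbb G_m\to 1$, Shapiro's lemma and Hilbert 90 give $H^1(k,R_{k'/k}\mathbb G_m)=0$, whence
$$H^1(k,T)\;\cong\;k^\ast/N_{k'/k}(k'^\ast)\;\cong\;\mathrm{Br}(k'/k),$$
the last isomorphism sending $e$ to the cyclic (quaternion) class $(k'/k,e)$. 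As $X_{k'}$ carries a line bundle of multidegree $(1,1,1)$ over $k'$, the torsor $\mathrm{Pic}^{\mathbf 1}_{X/k}$ is split by $k'$ and its class lies in this group.

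Third—and this is the crux—I would compute the torsor class via a cocycle and match it with $[C]$. Choosing $\mathcal O(1)$ on each of $C,\ell_1,\ell_2$ over $k'$ together with gluing data $(g_a,g_b,g_c)\in(k'^\ast)^3$ at the nodes $(P_1,P_2,v)$ gives a $k'$-point $\xi$ of $\mathrm{Pic}^{\mathbf 1}_{X/k}$, and the torsor class is $\bigl[\sigma\mapsto{}^{\sigma}\!\xi-\xi\bigr]\in H^1(\langle\sigma\rangle,T(k'))=k^\ast/N_{k'/k}(k'^\ast)$. The invariant of $\xi$ in $T(k')$ is the monodromy $g_a g_c g_b^{-1}$ around the cycle. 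The data on $\ell_1,\ell_2$ and at $v$ can be chosen $\sigma$-equivariantly, since these are swapped symmetrically or fixed; the only obstruction to equivariance concentrates in the gluings $g_a,g_b$ of $\mathcal O_C(1)$ at the conjugate pair $P_1,P_2$. A $\sigma$-equivariant such choice is exactly a descent datum for $\mathcal O_C(1)$ to a degree-$1$ line bundle on $C$ over $k$, which exists if and only if $C$ splits; the resulting discrepancy is precisely the Severi--Brauer class $[C]\in\mathrm{Br}(k'/k)=k^\ast/N_{k'/k}(k'^\ast)$. I expect the main difficulty to be this identification: carefully tracking the gluing parameters through the normalization sequence and verifying that the norm-residue symbol read off from $(g_a,g_b)$ equals the conic's class, rather than being killed by the freedom in rescaling $\ell_1,\ell_2$. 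Granting this, the torsor class equals $[C]$, which is nonzero because $C$ is non-split, and therefore $\mathrm{Pic}^{\mathbf 1}_{X/k}(k)=\varnothing$.
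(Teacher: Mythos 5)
Your route is genuinely different from the paper's, and your claimed answer is correct: the class of the torsor $\mathrm{Pic}^{\mathbf 1}_{X/k}$ in $H^1(k,R^{(1)}_{k'/k}\mathbb{G}_m)\cong k^\ast/N_{k'/k}(k'^\ast)\cong\mathrm{Br}(k'/k)$ is indeed $[C]$. Your preliminaries are all right as well: the triangle dual graph over $k'$, the $-1$ action of $\sigma$ on $H_1(\Gamma,\mathbb Z)$, the identification of $\mathrm{Pic}^0_{X/k}$ with the norm-one torus, and the opening warning that restricting a $k$-point of the torsor to the component $C$ proves nothing. But the write-up stops exactly where the content is. The whole lemma reduces to the assertion that the cocycle $\sigma\mapsto{}^\sigma\xi-\xi$ represents $[C]$ rather than a trivial class, and you defer this with ``granting this.'' The danger you yourself flag is real: a priori the discrepancy in the gluings at $P_1,P_2$ could be absorbed by the rescaling freedom on $\ell_1,\ell_2$ and at $v$, or the computation could return $2[C]=0$ (which would destroy the argument); moreover matching a class in $H^1(\langle\sigma\rangle,T(k'))$ with the descent obstruction in $H^2(\langle\sigma\rangle,k'^\ast)$ uses the periodicity of Tate cohomology for cyclic groups and carries a normalization that must be tracked. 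Until that cocycle is actually computed, you have not excluded $\mathrm{Pic}^{\mathbf 1}_{X/k}(k)\neq\varnothing$.

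The paper sidesteps all of this with a short explicit construction: fix a conjugate pair $d,d'$ on $D\setminus\{P\}$ with its canonical section $s_{d+d'}$ of $\mathcal O_D(d+d')$, and send a geometric point $c$ of $C_{\overline k}\cong\mathbb P^1_{\overline k}$ to the multidegree-$(1,1,1)$ bundle obtained by gluing $\mathcal O_{\mathbb P^1_{\overline k}}(1)$, with its section vanishing at $c$, to $\mathcal O_D(d+d')$ along the two points of $P$. This gives a Galois-equivariant isomorphism $C\setminus\{P\}\xrightarrow{\sim}\mathrm{Pic}^{\mathbf 1}_{X/k}$ defined over $k$; since a non-split conic has no rational points, neither does the torsor. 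That finishes the lemma in a few lines and, as a byproduct, establishes exactly the identification you are after, since $C\setminus\{P\}$ is the standard $R^{(1)}_{k'/k}\mathbb G_m$-torsor of class $[C]$. I would suggest either carrying out your cocycle computation in full or replacing your third step with this explicit parametrization of the torsor by the conic.
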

\begin{proof}We construct an isomorphism $\mathcal{A}\colon C\backslash\{P\}\to \mathrm{Pic}^1_{X/k}$, then conclude from our assumption on non-splitness of $C$.  Take a pair of Galois conjugate points $d,d'$ on $D\backslash\{P\}$. Then $\mathcal{O}_D(d+d')$ is a line bundle defined over $k$, with degree one on each component of $D$. Up to scaling, it has a canonical section $s_{d+d'}$ vanishing at $d,d'$.

For any geometric point $c\in C_{\overline{k}}\cong\mathbb{P}^1_{\overline{k}}$, up to scaling, the line bundle $\mathcal{O}_{\mathbb{P}^1_{\overline{k}}}(1)$ has a canonical section $s_c$ vanishing at $c$. Let $x,x'$ be the geometrical points of $P$. We assign $c$ with the line bundle $\mathcal{A}(c)$ on $X_{\overline{k}}$, obtained from $\mathcal{O}_{\mathbb{P}^1_{\overline{k}}}(1)$ and $\mathcal{O}_D(d+d')$ by identifying the section of $s_c$ with $s_{d+d'}$ at $x$ and $x'$. Then clearly $\mathcal{A}$ is an isomorphism and descends to $k$.
\end{proof}

\begin{proposition}Let $\Gamma$ be the graph as in Theorem \ref{main0}. Let's work over a field $k_0$. Let $X_\Gamma/k_\Gamma$ be the universal genus $g$ curve with dual graph $\Gamma$. Then the $\mathrm{Pic}^0_{X_\Gamma/k_\Gamma}$-torsor $\mathrm{Pic}^{\mathbf{1}}_{X_\Gamma/k_\Gamma}$ does not have a rational point.
\end{proposition}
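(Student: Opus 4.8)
The plan is to relate the rational points of the Picard torsor $\mathrm{Pic}^{\mathbf{1}}_{X_\Gamma/k_\Gamma}$ to the splitting of the associated conic $C_\Gamma$, and then invoke Theorem \ref{main0}, which already tells us $C_\Gamma$ does not split. The conceptual bridge is Lemma \ref{pictor}: in the simplest genus-one model it constructs an explicit isomorphism $\mathcal{A}\colon C\setminus\{P\}\to\mathrm{Pic}^1_{X/k}$ from a non-split conic $C$ to the degree-one Picard scheme of the clutched curve. So the heart of the argument is to produce, on the universal curve $X_\Gamma$, an analogous morphism from (an open part of) the non-split conic $C_\Gamma$ into $\mathrm{Pic}^{\mathbf{1}}_{X_\Gamma/k_\Gamma}$, witnessing that a rational point of the torsor would force a rational point of the conic.

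Concretely, first I would recall from Section \ref{sec2} that $C_\Gamma = X_\Gamma^\nu$ as a conic bundle over $S_f = \mathrm{Spec}(K_\Gamma)$, so the geometric components of $X_\Gamma$ are the conic together with the data of which pairs of points get glued at the nodes. The strategy is to build the map $\mathcal{A}$ on geometric points exactly as in Lemma \ref{pictor}: fix a suitable auxiliary $k_\Gamma$-rational degree-one line bundle on the components other than one chosen component, and send a geometric point $c$ of the chosen component $C_\Gamma$ to the line bundle on $X_\Gamma$ obtained by gluing the canonical section vanishing at $c$ to the fixed sections at the relevant nodes. Then I would verify this assignment is algebraic, Galois-equivariant, and hence descends to a morphism $C_\Gamma \dashrightarrow \mathrm{Pic}^{\mathbf{1}}_{X_\Gamma/k_\Gamma}$ defined over $k_\Gamma$, injective on the open locus where the construction makes sense.

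Having such a morphism, I would argue the contrapositive: if $\mathrm{Pic}^{\mathbf{1}}_{X_\Gamma/k_\Gamma}$ had a $k_\Gamma$-rational point, then pulling back along $\mathcal{A}$ (or more precisely, using that $\mathcal{A}$ identifies an open subscheme of $C_\Gamma$ with an open subscheme of $\mathrm{Pic}^{\mathbf{1}}_{X_\Gamma/k_\Gamma}$) yields a $k_\Gamma$-rational point of $C_\Gamma$. Since a smooth conic with a rational point is split, this contradicts Theorem \ref{main0}, which asserts $C_\Gamma$ does not split for the listed graphs. The subtlety is that a rational point of the torsor need not a priori land in the image of $\mathcal{A}$, so I would need to check either that $\mathcal{A}$ is an isomorphism onto its image of full dimension and that $\mathrm{Pic}^{\mathbf{1}}$ is a torsor under the connected torus $\mathrm{Pic}^0$ (so that any rational point can be translated into the image), or that the cocycle class obstructing a rational point of the torsor coincides with the Brauer/conic class up to the identifications of Sections \ref{sec4}--\ref{sec6}.

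The main obstacle I anticipate is precisely this passage from the genus-one toy model of Lemma \ref{pictor} to the general totally degenerate curve with dual graph $\Gamma$: in higher genus the normalization has many components and many nodes, so the gluing data defining $\mathcal{A}$ is more elaborate, and I must confirm that degree $\mathbf{1}$ on every component together with the node-gluing conditions pins down the Picard torsor as precisely a conic-torsor rather than a more complicated homogeneous space. I expect the cleanest route is to reduce the structure of $\mathrm{Pic}^{\mathbf{1}}_{X_\Gamma/k_\Gamma}$ to the component $C_\Gamma$ by showing the other components contribute only rational data (as the auxiliary bundle in Lemma \ref{pictor} does), thereby isolating the single obstruction class as the class of $C_\Gamma$; once that reduction is in place, the non-splitness of $C_\Gamma$ supplied by Theorem \ref{main0} finishes the proof.
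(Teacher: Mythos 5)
There is a genuine gap, and it is precisely at the point you flag as a ``subtlety.'' Your primary mechanism is to embed (an open part of) the conic into $\mathrm{Pic}^{\mathbf{1}}_{X_\Gamma/k_\Gamma}$ via a gluing map $\mathcal{A}$ and then pull a rational point of the torsor back to a rational point of the conic. But for $g\geq 3$ the torsor $\mathrm{Pic}^{\mathbf{1}}_{X_\Gamma/k_\Gamma}$ is a torsor under a torus of dimension $g=\mathrm{rk}\,\mathrm{H}^1(\Gamma,\mathbb{Z})$, while any map out of the conic has one-dimensional image; so $\mathcal{A}$ can never be ``an isomorphism onto its image of full dimension,'' and a rational point of the torsor cannot in general be translated by $\mathrm{Pic}^0(k_\Gamma)$ into a one-dimensional closed subvariety. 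The toy model of Lemma \ref{pictor} works exactly because there the curve has genus $1$ and the torsor is itself one-dimensional. Your fallback --- identifying ``the'' obstruction cocycle with the conic class --- is also not immediate, since the obstruction lives in $\mathrm{H}^1(k_\Gamma,\mathrm{Pic}^0)$ for a $g$-dimensional torus, not in $\mathrm{Br}(k_\Gamma)[2]$.

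The missing idea, which is what the paper's proof supplies, is to run the argument in the opposite direction: \emph{project} the torsor onto a factor governed by the conic rather than inject the conic into the torsor. Concretely, one first base changes to $k'^{\tau}$, the field indexing a chosen component $v_0$ (here $\tau$ is a reflection in $\mathrm{Aut}(\Gamma)=D_{2g-2}$ stabilizing $v_0$), so that one component becomes distinguished. Over that field the character lattice $\mathrm{H}^1(\Gamma,\mathbb{Z})$ decomposes Galois-equivariantly into a direct sum of sublattices spanned by suitable loops, hence the Picard torus splits as a product $T_\infty\times T_{\mathrm{conj}}\times T_{\mathrm{inv}}$ and the torsor splits compatibly as $P_\infty\times P_{\mathrm{conj}}\times P_{\mathrm{inv}}$. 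A rational point of the product gives a rational point of each factor, so it suffices to kill one factor; the factor $P_{\mathrm{inv}}$ corresponds to a genus-$1$ subconfiguration of exactly the shape of Lemma \ref{pictor}, and there the non-splitness of $X_\Gamma^\nu$ from Theorem \ref{main0} applies. Your last sentence gestures at ``isolating the single obstruction class,'' which is the right instinct, but without the base change to $k'^{\tau}$ and the explicit splitting of $\mathrm{H}^1(\Gamma,\mathbb{Z})$ the reduction to Lemma \ref{pictor} is not established, and the argument as written does not close.
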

\begin{proof} We prove the proposition when the graph $\Gamma$ is the circulant graph as case (1) in Theorem \ref{main0}. The other cases are similar.

Note that $\mathrm{Aut}(\Gamma)=D_{2g-2}$, the dihedral group with order $2g-2$, so the curve $X_\Gamma$ is split by some Galois extension $k'/k_\Gamma$ with Galois group $D_{2g-2}$. Let $\tau\in D_{2g-2}$ be a reflection which fixes vertex $v_0$. Since $\mathrm{Aut}(\Gamma)$ acts transitively on the geometric components, we know the geometrical components of $X_\Gamma$ are indexed by $k^{\prime\mathrm{Stab}(v_0)}=k^{\prime\tau}$. Thus the geometrical components of the base change $(X_{\Gamma})_{k^{\prime\tau}}$ are indexed by the separable extension $(k^{\prime\tau}\otimes_{k'} k^{\prime\tau})/k^{\prime\tau}$, or equivalently the $\tau$-orbits of $D_{2g-2}$. We show the base change $(\mathrm{Pic}^{\mathbf{1}}_{X_\Gamma/k_\Gamma})_{k^{\prime\tau}}=\mathrm{Pic}^{\mathbf{1}}_{(X_{\Gamma})_{k^{\prime\tau}}/k^{\prime\tau}}$ does not have a rational point.

Let $L_i:=[v_{i-1}v_iv_{i+1}]$ and $L_\infty:=[v_0\dots v_{g-2}]$ be the loops in $\Gamma$. We have $\mathrm{H}^1(\Gamma,\mathbb{Z})=\langle L_\infty\rangle\oplus \left(\bigoplus_{i\neq0,\frac{g-1}{2}} \langle L_i\rangle\right)\oplus\left(\bigoplus_{i\in\{0,\frac{g-1}{2}\}} \langle L_i\rangle\right)$, thus the Picard torus $\mathrm{Pic}^0_{(X_\Gamma)_{k^{\prime\tau}}/k^{\prime\tau}}$, whose character lattice is $\mathrm{H}^1(\Gamma,\mathbb{Z})$, splits accordingly into product of tori $T_\infty\times T_{\mathrm{conj}}\times T_{\mathrm{inv}}$.
Hence the torsor $\mathrm{Pic}_{(X_\Gamma)_{k^{\prime\tau}}/k^{\prime\tau}}^{\mathbf{1}}$ decomposes accordingly into $P_\infty\times P_{\mathrm{conj}}\times P_{\mathrm{inv}}$, and it suffices to show the torsor $P_{\mathrm{inv}}$ does not have a rational point. Note that the conic $X_\Gamma^\nu/k^{\prime\tau}$ is non-split by Theorem \ref{main0}, we conclude from Lemma \ref{pictor}. \end{proof}



\section{Period and index of the universal curve}
Let's work over a field $k_0$. Let $X/k$ be the universal genus $g$ curve. We show:
\begin{theorem}\label{main}The torsor $[\mathrm{Pic}^1_{X/k}]\in \mathrm{H}^1(k,\mathrm{Pic}^0_{X/k})$ has order $2g-2$.
\end{theorem}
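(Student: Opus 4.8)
The plan is to prove the stronger statement that the \emph{period} of $X/k$, i.e.\ the order of $[\mathrm{Pic}^1_{X/k}]$, equals $2g-2$; since the dualizing sheaf $\omega_{X/k}$ is an honest degree-$(2g-2)$ line bundle, this pins down the index at the same time. First I would record the upper bound: $\omega_{X/k}$ is a rational point of $\mathrm{Pic}^{2g-2}_{X/k}$, so $(2g-2)\cdot[\mathrm{Pic}^1_{X/k}]=[\mathrm{Pic}^{2g-2}_{X/k}]=0$ and the period divides $2g-2$. As period divides index which in turn divides $2g-2$, it suffices to prove the single fact that the period is \emph{not} a proper divisor of $2g-2$; this alone forces both invariants to equal $2g-2$.

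For the lower bound I would specialize to the boundary. Recall that $\mathrm{Pic}^d_{X/k}(k)\neq\emptyset$ is equivalent to the vanishing of $d\cdot[\mathrm{Pic}^1_{X/k}]$, so the period is the least $d>0$ for which $\mathrm{Pic}^d_{X/k}$ acquires a rational point. Fix the circulant graph $\Gamma$ of Theorem~\ref{main0}(1), with $g-1$ vertices and $\mathrm{Aut}(\Gamma)$ acting transitively on them. Since $\overline{\mathcal M}_{g}$ is irreducible, the closed point $[X_\Gamma]$ lies in the closure of the generic point, so there is a trait $\mathrm{Spec}(R)\to\overline{\mathcal M}_{g}$ with smooth generic fibre (a base change of $X$) and special fibre the totally degenerate curve $X_\Gamma/k_\Gamma$. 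A rational point of $\mathrm{Pic}^d_{X/k}$ spreads out and, along this trait, specializes to a $k_\Gamma$-rational class on $X_\Gamma$ (see the final paragraph for the precise justification). Because $\mathrm{Gal}(k'/k_\Gamma)=\mathrm{Aut}(\Gamma)$ permutes the $g-1$ geometric components transitively, any $k_\Gamma$-rational class has \emph{constant} geometric multidegree, forcing $(g-1)\mid d$. The only multiples of $g-1$ dividing $2g-2=2(g-1)$ are $g-1$ and $2g-2$, so the period is one of these two.

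It remains to exclude $d=g-1$. If the period were $g-1$, the specialized class would have constant multidegree $\mathbf 1=(1,\dots,1)$, i.e.\ $\mathrm{Pic}^{\mathbf 1}_{X_\Gamma/k_\Gamma}$ would have a $k_\Gamma$-rational point. This contradicts the Proposition of Section~\ref{nontri}, whose non-triviality of $\mathrm{Pic}^{\mathbf 1}_{X_\Gamma}$ rests, via Lemma~\ref{pictor}, on the non-splitness of the conic $C_\Gamma$ established in Theorem~\ref{main0}. Hence the period equals $2g-2$, and the same degeneration applied to the other graphs of Theorem~\ref{main0} handles the remaining residues of $g$.

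The delicate point is the specialization across the boundary of $\overline{\mathcal M}_g$: there the relative Picard scheme is not proper, so a rational point of $\mathrm{Pic}^d_{X/k}$ need not extend to a rational point of a fixed multidegree component of $\mathrm{Pic}_{X_\Gamma}$, but only to the N\'eron model, landing in some class of the component group $\Phi$. The main work is therefore to verify that a $k_\Gamma$-stable degree-$d$ class in $\Phi$ still forces $(g-1)\mid d$, and that the case $d=g-1$ is genuinely measured by the torsor $\mathrm{Pic}^{\mathbf 1}_{X_\Gamma}$. Here I would use the explicit splitting $\mathrm{Pic}^0_{X_\Gamma}=T_\infty\times T_{\mathrm{conj}}\times T_{\mathrm{inv}}$ recorded in Section~\ref{nontri} to isolate the order-$2$ obstruction $P_{\mathrm{inv}}$ supplied by the non-split conic, which is precisely what prevents multidegree $\mathbf 1$ from descending.
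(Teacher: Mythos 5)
Your overall strategy is the paper's: bound the period above by $2g-2$ via the canonical bundle, force it to be a multiple of $g-1$, and then exclude $g-1$ by specializing to a totally degenerate curve $X_\Gamma$ and invoking the non-triviality of $\mathrm{Pic}^{\mathbf 1}_{X_\Gamma/k_\Gamma}$ from Section \ref{nontri}. The one genuine divergence is the ``multiple of $g-1$'' step: the paper gets it from the Brauer obstruction class $\alpha\in\mathrm{Br}(\mathrm{Pic}^0_{X/k})$, which is the image of $[\mathrm{Pic}^1_{X/k}]$ under the transgression map and has period $g-1$ by \cite[4.3.4]{Qixiao}, whereas you extract it from the same specialization mechanism (transitivity of $\mathrm{Aut}(\Gamma)$ on components forces constant multidegree). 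Your route is more uniform and avoids the external reference, and it is in the spirit of the argument the paper itself uses at $d=g-1$; that is a legitimate simplification \emph{provided} the specialization step is actually established.

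That is where the gap is. You correctly identify the delicate point --- a rational point of $\mathrm{Pic}^d_{X/k}$ does not obviously specialize to a rational point of a fixed multidegree component over the boundary --- but you then declare its resolution to be ``the main work'' and do not carry it out. This deferred step is the entire content of the theorem; everything else (canonical bundle, divisibility bookkeeping, quoting Section \ref{nontri}) is formal. The paper's resolution is concrete and different from what you sketch: it first slices $k_0[[t_1,\dots,t_m]]$-families into a chain of DVR specializations from the universal curve down to a curve $X_1/k_1$ with \emph{smooth} fibers, where $\mathrm{Pic}^{g-1}$ is proper and the valuative criterion applies; only then does it cross the boundary, using a \emph{regular} model $\mathcal{X}/k_\Gamma[[t_1]]$ (via \cite[B.2]{Conrad}) so that a line bundle on the generic fiber of $\mathcal{X}_{R_1^{\mathrm{sh}}}$ extends as the closure of a Weil divisor, preserving total degree and yielding a Galois-controlled class on the special fiber. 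Your proposed substitute --- working with the N\'eron component group $\Phi$ --- is actually weaker than what is needed: a Galois-invariant class in $\Phi$ is only an invariant element of $\mathbb{Z}^V$ modulo the image of the intersection matrix (the graph Laplacian), and invariance modulo that lattice does \emph{not} by itself force a constant multidegree, hence does not give $(g-1)\mid d$ nor put the $d=g-1$ class in $\mathrm{Pic}^{\mathbf 1}$. So the argument must be run with an honest extended line bundle on a regular total space, as the paper does, not at the level of $\Phi$; as written, your proof is incomplete at its decisive step.
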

\begin{proof} Consider the Hochschild-Serre spectral sequence for the base change $(\mathrm{Pic}^0_{X/k})_{k^{\mathrm{sep}}}\to\mathrm{Pic}^0_{X/k}$. Let $\mathrm{Br}_{\mathrm{alg}}(\mathrm{Pic}^0_{X/k}):=\mathrm{Ker}(\mathrm{Br}(\mathrm{Pic}^0_{X/k})\to\mathrm{Br}((\mathrm{Pic}^0_{X_{k}/k})_{k^{\mathrm{sep}}})$. The Brauer group can be filtered into: $0\subset\mathrm{Br}(k)\subset\mathrm{Br}_{\mathrm{alg}}(\mathrm{Pic}^0_{X/k})\subset\mathrm{Br}(\mathrm{Pic}^0_{X/k}).$
Since $\mathrm{Pic}^0_{X/k}$ has the identity section, we know $\mathrm{Br}_{\mathrm{alg}}(\mathrm{Pic}^0_{X/k})$ factors into $\mathrm{Br}(k)\oplus\mathrm{Ker}(\mathrm{d}_2^{1,1})$, where $\mathrm{d}_{2}^{1,1}$ is the transgression map $\mathrm{H}^1(k,\mathrm{Pic}_{\mathrm{Pic}^0_{X/k}/k})\to\mathrm{H}^3(k,\mathbb{G}_m)$.
One can explicitly check that the class of the torsor $[\mathrm{Pic}_{X/k}^1]\in\mathrm{H}^1(k,\mathrm{Pic}^0_{X/k})=\mathrm{H}^1(k,\mathrm{Pic}^0_{\mathrm{Pic}^0_{X/k}/k})$ maps to the obstruction class $\alpha\in\mathrm{Br}(\mathrm{Pic}^0_{X/k})$. One can show $\mathrm{per}(\alpha)=g-1$ \cite[4.3.4]{Qixiao}, thus the period of $[\mathrm{Pic}^1_{X/k}]$ is a multiple of $g-1$. It suffice to show the torsor $\mathrm{Pic}^{g-1}_{X/k}$ does not have a $k$-rational point.

By the discussion in Section \ref{nontri}, we may take a graph $\Gamma$ with genus $g$, such that $\mathrm{Pic}^\mathbf{1}_{X_\Gamma/k_\Gamma}$ does not have a rational point. Let $R_1=k_\Gamma[[t_1]]$. By \cite[B.2]{Conrad}, we may find a regular surface $\mathcal{X}/R_1$, such that the special fiber is $X_\Gamma/k_\Gamma$ and generic fiber $X_1/k_1$ is smooth. We show the component $\mathrm{Pic}^{g-1}_{X_1/k_1}$ does not have rational points.

Let $R_1^{\mathrm{sh}}$ be the strict henselization of $R_1$. Note that a rational point $Q_1$ on $\mathrm{Pic}^{g-1}_{X_1/k_1}$ represents a line bundle $L$ on the generic fiber of $(\mathcal{X})_{R_1^{\mathrm{sh}}}$. Since $\mathcal{X}$ is regular, so is $(\mathcal{X})_{R_1^{\mathrm{sh}}}$. The line bundle $L$ canonically extends to $(\mathcal{X})_{R_1^{\mathrm{sh}}}$, by taking closure of the corresponding Weil divisor. The line bundle on the special fiber yields a rational point $Q_0$ on $\mathrm{Pic}_{X_\Gamma/k_\Gamma}$. Let $k'/k_\Gamma$ be the splitting field of $X_\Gamma$, then $Q_0$ represents a line bundle on $(X_\Gamma)_{k'}$. Note that $Q_0$ is $\mathrm{Gal}(k'/k_\Gamma)$-invariant, and that all the $(g-1)$ components of $(X_\Gamma)_{k'}$ are Galois conjugate, thus $Q_0$ lands in $\mathrm{Pic}^\mathbf{1}_{X_\Gamma/k_\Gamma}$. This contradicts our assumption on non-triviality of $\mathrm{Pic}^{\mathbf{1}}_{X_\Gamma/k_\Gamma}.$

Apply \cite[0DR0]{SP} to the stack $\overline{\mathcal{M}}_g$, we pick a flat families of curves over $k_1[[t_1,\dots,t_m]]$, such that the special fiber is $X_1/k_1$, the generic fiber $X_m/k_m$ is a base change of $X/k$. We slice the family into families of curves over discrete valuation rings $R_i=k_\Gamma((t_1,\dots,t_{i-1}))[[t_i]]$ for $i=2,\dots m$. By properness, any rational point on $\mathrm{Pic}_{X_m/k_m}^{g-1}$ specializes by step by step to rational points $\mathrm{Pic}^{g-1}_{X_1/k_1}$. Our previous discussion show there does not exist rational points on $\mathrm{Pic}^{g-1}_{X_m/k_m}=(\mathrm{Pic}^{g-1}_{X/k})_{k_m}$, hence $\mathrm{Pic}^{g-1}_{X/k}$ does not have rational points.
\end{proof}
\begin{theorem} The index $\mathrm{ind}(X):=\mathrm{g.c.d}\{\mathrm{deg}(Z)|\textrm{closed\ subscheme}\ Z{\varsubsetneq} X\}$ equals $2g-2$.
\end{theorem}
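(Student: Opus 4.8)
The plan is to trap the index $\mathrm{ind}(X)$ between two bounds that happen to coincide: a lower bound coming from the period and an upper bound coming from the canonical class. Write $P$ for the period, i.e. the order of $[\mathrm{Pic}^1_{X/k}]$ in $\mathrm{H}^1(k,\mathrm{Pic}^0_{X/k})$; by Theorem \ref{main} we have $P=2g-2$. Recall that $[\mathrm{Pic}^d_{X/k}]=d\cdot[\mathrm{Pic}^1_{X/k}]$, and that $\mathrm{Pic}^d_{X/k}(k)\neq\emptyset$ precisely when this torsor class vanishes.

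For the lower bound I would show $P\mid\mathrm{ind}(X)$. Since $X$ is a smooth projective geometrically integral curve, every nonempty proper closed subscheme $Z\subsetneq X$ is a $0$-dimensional effective Cartier divisor, so $\mathcal{O}_X(Z)$ is a genuine line bundle on $X$ of degree $\deg(Z)$ defined over $k$. It therefore gives a $k$-rational point of $\mathrm{Pic}^{\deg Z}_{X/k}$, forcing $\deg(Z)\cdot[\mathrm{Pic}^1_{X/k}]=0$ and hence $P\mid\deg(Z)$. Taking the greatest common divisor over all such $Z$ yields $P\mid\mathrm{ind}(X)$, so in particular $2g-2\mid\mathrm{ind}(X)$.

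For the upper bound I would exhibit a single closed subscheme of degree $2g-2$. Since $g\geq 3$ we have $h^0(X,\omega_X)=g\geq 1$, so the $k$-vector space $\mathrm{H}^0(X,\omega_X)$ contains a nonzero section $s$; the zero scheme of $s$ is an effective $k$-rational divisor of degree $\deg\omega_X=2g-2$, that is, a closed subscheme $Z_0\subsetneq X$ with $\deg(Z_0)=2g-2$. Consequently $\mathrm{ind}(X)\mid 2g-2$. Combining the two divisibilities gives $2g-2\mid\mathrm{ind}(X)\mid 2g-2$, whence $\mathrm{ind}(X)=2g-2$.

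I expect no real obstacle here: the substantive work is already packaged into the period computation of Theorem \ref{main}. The only points meriting care are the elementary observation that a closed subscheme of a smooth curve produces an \emph{honest} line bundle (so that the strong statement $P\mid\mathrm{ind}(X)$ holds, rather than merely a statement about rational points of the Picard scheme modulo a Brauer obstruction), and the fact that the canonical class is effective in our range $g\geq 3$. Neither is difficult, so this final step should be brief.
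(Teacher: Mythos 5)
Your proof is correct and follows essentially the same route as the paper: trap $\mathrm{ind}(X)$ between the period $2g-2$ from Theorem \ref{main} (lower bound) and the degree of the canonical class (upper bound). The only cosmetic difference is that you prove $\mathrm{per}(X)\mid\mathrm{ind}(X)$ directly from the observation that a proper closed subscheme of a smooth curve is an effective Cartier divisor giving an honest $k$-rational line bundle, whereas the paper simply cites Lichtenbaum--Tate for this divisibility.
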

\begin{proof} Note that the canonical divisor always provide a degree $2g-2$ zero cycle on $X$. On the other hand, by \cite{LT}, the period of $X$, or the order of $[\mathrm{Pic}^1_{X/k}]$, always divides $\mathrm{ind}(X)$. We conclude from Theorem \ref{main}.

An alternative way is to observe that index drops after specialization. Pick the graphs as in Theorem \ref{main0}, it suffices show that closed points on $X_\Gamma$ has degree a multiple of $2g-2$. For any closed subscheme $Z\subset X_\Gamma$, its irreducible components either lie in the smooth locus of $X_\Gamma$ or in the singular locus of $X_\Gamma$.
We conclude from the non-splitness of the conic $C_\Gamma$, or from counting the degree of singular locus.

\end{proof}

\bibliographystyle{alpha}
\bibliography{references}

\end{document}